\newcommand{\La}{\Lambda}
\newcommand{\Gr}{\text{Gr}}
\newcommand{\R}{\mathbb{R}}
\newcommand{\Z}{\mathbb{Z}}
\newcommand{\C}{\mathbb{C}}
\newcommand{\st}{\text{st}}
\newcommand{\Symp}{\text{Symp}}
\newcommand{\w}{\mathfrak{w}}
\newcommand{\D}{\mathbb{D}}
\newcommand{\FM}{\mathfrak{M}}
\renewcommand{\S}{\mathbb{S}}
\theoremstyle{plain}
\newtheorem{theorem}{Theorem}[section]
\newtheorem{lemma}[theorem]{Lemma}
\newtheorem{proposition}[theorem]{Proposition}
\newtheorem{corollary}[theorem]{Corollary}
\newtheorem{conjecture}[theorem]{Conjecture}
\theoremstyle{definition}
\newtheorem{definition}[theorem]{Definition}
\newtheorem{remark}[theorem]{Remark} 
\newtheorem{ex}[theorem]{Example}
\crefname{theorem}{theorem}{theorems}
\Crefname{theorem}{Theorem}{Theorems}
\crefname{definition}{definition}{definitions}
\Crefname{definition}{Definition}{Definitions}
\crefname{example}{example}{examples}
\Crefname{example}{Example}{Examples}
\crefname{lemma}{lemma}{lemmas}
\Crefname{lemma}{Lemma}{Lemmas}
\crefname{proposition}{proposition}{propositions}
\Crefname{proposition}{Proposition}{Propositions}
\crefname{corollary}{corollary}{corollaries}
\Crefname{corollary}{Corollary}{Corollaries}
\crefname{remark}{remark}{remarks}
\Crefname{remark}{Remark}{Remarks}
\title{Exact Lagrangian fillings of twist-spun torus links}
\author{Vincent Chen}
\author{Patton Galloway}
\author{James Hughes}
\author{Luciana Wei}
\begin{document}
\begin{abstract} 

We construct exact Lagrangian fillings of Legendrian torus links $\La(k, n-k)$ that are fixed by a Legendrian loop that acts by $2\pi\ell/n$ rotation. Using these rotationally symmetric fillings, we produce fillings of the corresponding Legendrian twist-spun tori. Our construction is combinatorial in nature, relating symmetric weakly separated collections and plabic graphs to symmetric Legendrian weaves via the T-shift procedure of Casals, Le, Sherman-Bennett, and Weng. The main technical ingredient in this process is a necessary and sufficient condition for the existence of maximal weakly separated collections of $k$-element subsets of $\{1, \dots, n\}$ that are fixed by addition of $\ell$ modulo $n$.

\end{abstract}
\maketitle

\section{Introduction}

\subsection{Context}
The classification of exact Lagrangian fillings of Legendrian links is an important problem in low-dimensional contact and symplectic topology. While the unknot and the Hopf link are the only two Legendrians for which there exist a complete classification of fillings \cite{EliashbergPolterovich96, Thomson25}, the last decade has seen a significant advancement in our understanding of constructing and distinguishing fillings \cite{EHK, YuPan, TreumannZaslow, CasalsNg, capovillasearle2023newton}. Many of the most recent works related to this problem, including \cite{STWZ, CasalsGao, GSW, ABL22, Hughes2021, CasalsWeng}, are closely related to the theory of cluster algebras. In particular, \cite[Conjecture 5.1]{CasalsLagSkel} describes a conjectural ADE-type classification, giving a 1-1 correspondence between exact Lagrangian fillings of braid positive Legendrians and cluster seeds of corresponding cluster algebras. This is accompanied by \cite[Conjecture 5.4]{CasalsLagSkel}, which posits a BCFG-type classification of fillings of Legendrian links admitting certain symmetries.

In \cite{HughesRoy}, the third author and Agniva Roy expanded this conjectural correspondence between symmetric fillings and cluster seeds to include exact Lagrangian fillings of a particular family of Legendrian surfaces known as Legendrian twist-spun tori. These twist-spun tori are first studied in \cite{EK}, and they are constructed as the mapping tori of a Legendrian loop of a Legendrian link, i.e., a Legendrian isotopy starting and ending at the same front diagram. While Legendrian loops appear in other contexts---most notably the first construction of infinite families of exact Lagrangian fillings in \cite{CasalsGao}---the twist-spun tori constructed from them give an alternative interpretation for \cite[Conjecture 5.4]{CasalsLagSkel} through an adaptation of the cluster structures of \cite{CasalsWeng} to this higher dimensional context. 

From a cluster-theoretic perspective, twist-spinning corresponds to a folding operation on a cluster algebra analogous to folding Dynkin diagrams in classical Lie theory. Beyond the finite-type case, one particularly rich source of foldings arises from the action of the cyclic shift automorphism $\rho$ on the Grassmannian $\Gr(k, n)$, studied by Chris Fraser in \cite{Fraser2020}; see also \cite{KaufmanSpecialFolding}. In loc.\ cit., Fraser gives a conjectural generalized cluster structure on (the distinguished component of) the cyclic symmetry locus $\Gr(k, n)^{\rho^\ell}$, which the third author used to distinguish a collection of fillings of twist-spuns of $(2, n)$-torus links in \cite{HughesRoy}.

\subsection{Main results}
In this paper, we construct exact Lagrangian fillings of Legendrian torus links exhibiting certain rotational symmetry, and from these, fillings of corresponding twist-spun tori. Let $\La(k, n-k)$ denote the Legendrian $(k, n-k)$ torus link depicted in Figure~\ref{fig: -1Closure}, where we require $n-k\geq k$. We can describe $\La(k, n-k)$ as  the $(-1)$-closure of the positive braid $\beta=(\sigma_1\dots \sigma_{k-1})^{n}$ where $\sigma_i$ denotes the $i$th Artin generator of the braid group.
 By \cite{Kalman2005}, $\La(k, n-k)$ admits a Legendrian loop $\psi$ that can be described as conjugation of $\beta$ by the Coxeter element $\sigma_1\dots\sigma_{k-1}$. Our main result gives a sufficient condition on $k, n,$ and $\ell$ for the existence of a filling of the Legendrian twist spun $\Sigma_{\psi^\ell}(\La(k, n-k))$,  constructed as the mapping torus of the Legendrian loop $\psi^\ell$. Let us fix the notation $d=n/\gcd(n, \ell)$.

\begin{theorem}\label{thm:intro_twist-spun_fillings}
If $k$ is congruent to $-1, 0,$ or $1$ modulo $d$, then $\Sigma_{\psi^\ell}(\La(k, n-k))$ admits an orientable exact Lagrangian filling.    
\end{theorem}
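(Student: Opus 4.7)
The plan is to reduce the construction of a filling of the twist-spun tori to a purely combinatorial existence statement about rotationally symmetric weakly separated collections, and then verify that statement in each of the three residue classes $k \equiv -1, 0, 1 \pmod d$.

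First I would set up the dictionary between combinatorics and topology. The Legendrian loop $\psi$ realizes the cyclic shift on $\binom{[n]}{k}$, so $\psi^\ell$ realizes the action of $+\ell \pmod n$. By work of Casals--Le--Sherman-Bennett--Weng, a maximal weakly separated collection $\mathcal{C} \subset \binom{[n]}{k}$ determines a plabic graph and, via the T-shift procedure, a Legendrian weave whose induced exact Lagrangian filling of $\Lambda(k, n-k)$ lies in a cluster seed of the Grassmannian cluster algebra. If $\mathcal{C}$ is fixed by the $+\ell$-shift, then the associated weave and its filling can be chosen equivariant with respect to $\psi^\ell$. A $\psi^\ell$-equivariant filling $L$ of $\Lambda(k, n-k)$ then yields a filling of the mapping torus $\Sigma_{\psi^\ell}(\Lambda(k, n-k))$ by taking the mapping torus of the restriction $\psi^\ell|_L$; orientability follows once $L$ itself is orientable and the action preserves orientation, which holds because weave fillings are orientable and the rotation acts by an ambient contactomorphism.

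Given this reduction, the heart of the proof is the existence of a $(+\ell)$-invariant maximal weakly separated collection of $k$-subsets of $[n]$, and this is precisely the "main technical ingredient" flagged in the abstract. My plan is to exhibit such a collection explicitly in each of the three congruence cases. The case $k \equiv 0 \pmod d$ is the most transparent: since $d \mid k$, one can orbit-build $\mathcal{C}$ by taking a fundamental-domain piece of some known maximal weakly separated collection (for instance, one arising from a rectangles seed or a standard $\Gr(k,n)$ plabic graph) and closing it under the $+\ell$ action, with $d$-periodic subsets already present playing the role of fixed points. For $k \equiv \pm 1 \pmod d$, I would construct $\mathcal{C}$ by taking the orbit of a seed for the $d \mid k \mp 1$ case and inserting an additional $+\ell$-orbit of $k$-subsets obtained by adjoining or deleting a symmetric "necklace" element; weak separation between inserted and preserved subsets can be checked on the circular diagrammatic picture of $k$-subsets as chords.

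The hardest part will be verifying two things simultaneously inside the combinatorial step: (i) that the produced collection has the correct cardinality $k(n-k)+1$ to be maximal, and (ii) that weak separation is preserved under the $+\ell$ rotation and between all $\binom{d}{2}$ pairs of rotated copies. Cardinality requires understanding orbit sizes, including fixed $d$-periodic subsets, which forces the congruence conditions on $k$ modulo $d$; weak separation requires a case analysis on how rotated chord diagrams interact. I expect the congruence $k \equiv -1, 0, 1 \pmod d$ to arise precisely as the obstruction to closing up an orbit without violating weak separation, mirroring the well-known fact that the cyclic shift on $\Gr(k,n)$ has order $n/\gcd(n,k)$. Once these two items are verified, the weave/filling construction and mapping-torus step are formal, and the theorem follows.
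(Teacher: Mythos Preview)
Your reduction from topology to combinatorics is exactly the paper's: $\psi^\ell$ acts as $\rho^\ell$, T-shift turns a $\rho^\ell$-symmetric maximal weakly separated collection into a weave fixed (up to Legendrian isotopy) by $\psi^\ell$, and the mapping torus of that filling, spliced with the trace of the isotopy, gives the desired filling of the twist-spun via \cite[Proposition~6.2]{HughesRoy}. That part is fine and matches the paper's Section~4 essentially line for line.

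The gap is in the combinatorial existence step, which you correctly flag as the heart of the argument but then sketch in a way that does not work. Your plan for $k\equiv 0\pmod d$ is to ``take a fundamental-domain piece of some known maximal weakly separated collection \dots\ and close it under the $+\ell$ action''. This fails in general: the $\rho^\ell$-closure of a wedge of, say, the rectangles seed is almost never weakly separated, and there is no reason for the orbit-closure to have the right cardinality $k(n-k)+1$ either. Likewise, for $k\equiv\pm1\pmod d$ you propose to start from a seed for a \emph{different} $k$ (namely $k\mp1$) and adjust by a ``necklace''; but a weakly separated collection in $\binom{[n]}{k\mp1}$ does not transport to one in $\binom{[n]}{k}$ in any obvious way, and no necklace-insertion procedure of this kind is known. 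The congruence conditions do \emph{not} arise as ``the obstruction to closing up an orbit without violating weak separation'' in any direct sense, and the analogy you draw with the order $n/\gcd(n,k)$ of the cyclic shift is the wrong invariant here---the relevant quantity is $d=n/\gcd(n,\ell)$, which governs orbit sizes of individual elements, not of Pl\"ucker labels.

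What the paper actually does is build the $\rho^\ell$-symmetric collection from scratch by an explicit algorithm generalizing Pasquali--Th\"ornblad--Zimmermann. One fixes a total order on the $\rho^\ell$-orbits $\overline{a_1}<\dots<\overline{a_\ell}$ of $[n]$, sets $P_s=[n]\setminus\bigcup_{i<s}\overline{a_i}$, and inside each $P_s$ (and certain trimmed versions $P_{s,h}$ obtained by deleting a tail of $\overline{a_s}$) takes \emph{interval} subsets $I(i,h)$ of length $k$ in the induced cyclic order. The $\rho^\ell$-orbits of these intervals form $L_s$, and $D=\bigcup_s L_s$. Weak separation between different $L_s$ is nearly automatic (elements of $L_j$ for $j>i$ avoid $\overline{a_i}$ entirely), while within a single $L_s$ it uses that intervals in a cyclic order satisfy the chord-crossing condition; the cardinality count is a floor-sum computation that splits into the three cases $c\in\{-1,0,1\}$ precisely because the terminal layer $L_{\ell-r+1}$ has size $k+1$, $1$, or $0$ accordingly. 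None of this is recoverable from an orbit-closure heuristic; you would need to replace your sketch with a construction of this explicit type.
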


Our proof of Theorem \ref{thm:intro_twist-spun_fillings} is largely combinatorial in nature. We present our fillings of $\Sigma_{\psi^\ell}(\La(k, n-k))$ as mapping tori of Legendrian weave fillings of $\La(k, n-k)$ with $2\pi\ell/n$ rotational symmetry. Legendrian weaves are Legendrian surfaces whose front projections have a restricted set of singularities that can be encoded as a edge-colored graph. In the context of positroid varieties, these Legendrian weaves can be obtained by applying a process developed in \cite{CLSW2023} known as T-shift. This process inputs a reduced plabic graph, and outputs a Legendrian weave that induces the same cluster seed as the original plabic graph. 
\begin{center}
		\begin{figure}[h!]{ \includegraphics[width=.35\textwidth]{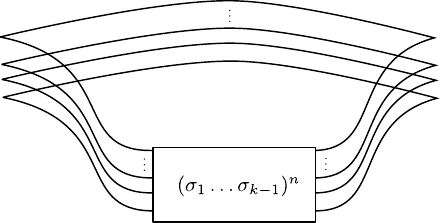}}\caption{A front projection of $\La(k, n-k)$ given as the $-1$-closure of the braid $(\sigma_1\dots\sigma_{k-1})^n$.}
			\label{fig: -1Closure}\end{figure}
	\end{center}

After verifying that the T-shift process preserves rotational symmetry, we prove Theorem~\ref{thm:intro_twist-spun_fillings} by producing rotationally symmetric plabic graphs, i.e., planar bicolored graphs that commonly appear as tools for encoding positroid combinatorics. This problem reduces---via the dual construction of plabic tilings---to finding maximal collections of weakly separated subsets of $[n]:=\{1, \dots, n\}$ of size $k$ that are fixed by the addition of $\ell$ modulo $n$; see Section~\ref{sec: combinatorics_background} for the definition of weak separation. 

\begin{theorem}\label{thm: intro_WSCs}
Given $k, n,$ and $\ell$, there is a $\rho^\ell$-symmetric weakly separated collection $D\subseteq {[n] \choose k}$ of size $k(n-k)+1$ if and only if $k$ is congruent to $-1, 0,$ or $1$ modulo $d$. 
\end{theorem}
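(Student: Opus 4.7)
\emph{Proof plan.}
The plan is to treat the two implications separately through the plabic tiling $T_D$ of Oh--Postnikov--Speyer attached to a MWSC $D \subseteq \binom{[n]}{k}$. Recall that $T_D$ is a CW decomposition of the closed disk: vertices are labeled by elements of $D$, edges join subsets differing by a single swap, and $2$-cells come in two colors---a \emph{black} face records a $(k-1)$-subset $I \subset [n]$ and has vertices $I \cup \{j\}$ for $j$ in a cyclic arc of $[n] \setminus I$, while a \emph{white} face records a $(k+1)$-subset $J$ and has vertices $J \setminus \{j\}$. When $D$ is $\rho^\ell$-invariant, the combinatorial $\mathbb{Z}/d$-action on $T_D$ is realized by a topological rotation of the disk of order $d$, with boundary marked points shifted by $2\pi\ell/n$.

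For the necessity direction, assume $d \geq 3$ (the conclusion is trivially satisfied when $d \leq 2$). The rotation has a unique fixed point in the interior of the disk, and that point lies in the open interior of some cell of $T_D$, which must therefore be preserved setwise by $\rho^\ell$. If the cell is a vertex, its label is a $k$-subset fixed by $\rho^\ell$, hence a union of $\rho^\ell$-orbits on $[n]$; since each such orbit has size $d$, this gives $d \mid k$. If the cell is an edge, then the entire $\mathbb{Z}/d$-action must preserve it setwise, but the symmetry group of an edge is only $\mathbb{Z}/2$, forcing $d \mid 2$ and contradicting $d \geq 3$. If the cell is a black (resp.\ white) $2$-cell, its defining $(k-1)$-subset (resp.\ $(k+1)$-subset) is $\rho^\ell$-invariant, giving $d \mid k - 1$ (resp.\ $d \mid k + 1$). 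Combining, we obtain $k \equiv -1, 0,$ or $1 \pmod{d}$.

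For the sufficiency direction, I construct a $\rho^\ell$-symmetric MWSC in each congruence class by building a plabic tiling around the central cell predicted by the necessity analysis: a central vertex labeled by a union of $k/d$ orbits of $\rho^\ell$ on $[n]$ when $k \equiv 0 \pmod d$; a central black face at a $\rho^\ell$-fixed $(k-1)$-subset whose surrounding $k$-subsets form a single $\rho^\ell$-orbit when $k \equiv 1 \pmod d$; and the complementation $S \mapsto [n] \setminus S$, which preserves weak separation and commutes with $\rho^\ell$ since $d \mid n$, to reduce $k \equiv -1 \pmod d$ to $k \equiv 1 \pmod d$. One then fills the remainder of the disk with an equivariant plabic sub-tiling on a fundamental domain of the $\mathbb{Z}/d$-rotation and propagates by the rotation; in each case $|D|$ minus the central-cell contribution is divisible by $d$, which is the numerical prerequisite for consistent propagation.

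The main obstacle will be this equivariant extension step: a naive plabic sub-tiling of a fundamental domain need not glue across its rotational boundary into a globally weakly separated collection. I plan to address this by inducting on $n$, peeling off full $\rho^\ell$-orbits of boundary indices (reducing $n$ by $d$) to descend to smaller instances in the same congruence class, and by leveraging the compatibility of the T-shift procedure with classical constructions of MWSCs in the non-symmetric setting to seed the induction.
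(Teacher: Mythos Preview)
Your necessity argument is correct and essentially identical to the paper's: both analyze the unique rotation-fixed point of the plabic tiling $\Sigma(D)$, split into the vertex/edge/face cases, and invoke the orbit-size lemma ($I=I+_n\ell$ forces $d\mid |I|$). The paper disposes of the edge case slightly differently---arguing that the two faces adjacent to a $\rho^\ell$-fixed edge would have to carry the same color, violating bipartiteness---but your $d\geq 3$ reduction via the symmetry group of an edge is equally valid, and you correctly note that $d\leq 2$ is vacuous.

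The sufficiency direction, however, is not a proof but a plan, and the plan has a real gap at exactly the point you flag. You propose to build outward from the predicted central cell by tiling a fundamental sector and propagating by rotation, then to handle the gluing obstruction by an induction on $n$ that peels off a full $\rho^\ell$-orbit of boundary indices. Two issues: first, the inductive step is not specified---you would need an explicit, symmetry-respecting procedure that takes a $\rho^{\ell'}$-symmetric MWSC in $\binom{[n-d]}{k}$ and extends it to one in $\binom{[n]}{k}$, and no such extension lemma is stated or proved. Second, your appeal to T-shift is misplaced: in this paper T-shift converts a plabic graph into a Legendrian weave of one lower rank, and is not a mechanism for producing new weakly separated collections, symmetric or otherwise. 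The complementation reduction of $k\equiv -1$ to $k\equiv 1$ is a genuine simplification, but it still leaves the $k\equiv 0$ and $k\equiv 1$ cases to be constructed from scratch.

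The paper takes a quite different route for sufficiency: it gives an explicit algorithm generalizing Pasquali--Th\"ornblad--Zimmermann. One fixes a total order on the $\rho^\ell$-orbits $\overline{a_1}<\dots<\overline{a_\ell}$ in $[n]$, sets $P_s=[n]\setminus\bigcup_{i<s}\overline{a_i}$, and for each $s$ builds a family $B_s$ of ``interval-like'' $k$-subsets of $P_s$ (intervals in auxiliary sets $P_{s,h}=P_s\setminus\{a_s+h\ell,\dots,a_s+(d-1)\ell\}$), then takes $L_s$ to be the $\rho^\ell$-orbit of $B_s$ and $D=\bigcup_s L_s$. Maximality is proved by an exact count ($|B_s|=k$ for small $s$, with explicit corrections at $s=\ell-r$ and $s=\ell-r+1$ depending on $c\in\{-1,0,1\}$), and weak separation is proved by showing that any failure would force certain witnesses into $\overline{a_s}$, leading to a contradiction. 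The case $\ell\nmid n$ is handled by first running the algorithm with $n'=d\ell$, discarding the first $\ell-\gcd(n,\ell)$ layers $L_s$, and transferring back via an order-preserving bijection. This is closer in spirit to your ``peel off orbits'' idea than your write-up suggests, but the point is that the paper carries out the bookkeeping explicitly rather than deferring it to an unspecified induction.
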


Here $\rho^\ell$-symmetric means that the set $D$ is fixed under the operation of adding $\ell$ to each element of each subset modulo $n$. We use ${[n] \choose k}$ to denote the set of $k$-element subsets of $[n]$. Code for generating a maximal $\rho^\ell$-symmetric weakly separated collection for a given $k, n,$ and $\ell$ satisfying the conditions of Theorem~\ref{thm: intro_WSCs} can be found at \cite{PattonGithub}. 

Theorem~\ref{thm: intro_WSCs} is a direct generalization of \cite[Theorem 1.6]{PasqTZ2019}, which produces $\rho^k$-symmetric maximal weakly separated collections. Weakly separated collections that are $\rho^\ell$-symmetric also appear as crucial ingredients in \cite{Fraser2020}, where the author gives a recipe for generating maximal-with-respect-to-inclusion sets for any $k, n,$ and $\ell$ with $k\leq d$. As such, Theorem~\ref{thm: intro_WSCs} also gives a criterion for determining when Fraser's weakly separated collections are size $k(n-k)+1$. 

The main result of \cite{PasqTZ2019} is largely motivated by the correspondence between maximal weakly separated collections of ${[n]\choose k}$ fixed under addition by $k$ and self-injective Jacobian algebras. While we are unaware of a corresponding representation-theoretic generalization of their work to the $\ell\neq k$ setting, we do wish to highlight certain connections between the weakly separated collections we produce and the cyclic symmetry locus of the Grassmannian, as studied in \cite{Fraser2020}. By \cite[Theorem 1.13]{HughesRoy}, if the induced action of $\psi^\ell$ on the sheaf moduli $\FM(\La(k, n-k))$ is globally foldable, then Theorem~\ref{thm:intro_twist-spun_fillings} implies that $\FM(\Sigma_{\psi^\ell}(\La(k, n-k)))$ admits a (skew-symmetrizable) cluster structure with charts induced by fillings of $\Sigma_{\psi^\ell}(\La(k, n-k))$. In general, however, the action of $\psi^\ell$ or $\rho^\ell$ need not be globally foldable. In these instances, Fraser produces a conjectural \emph{generalized} cluster algebra structure---in the sense of \cite{ChekhovShapiroGeneralizedCluster}---on the top-dimensional connected component of the $\rho^\ell$ fixed locus of $\Gr(k, n)\cong \FM(\La(k, n-k))$. Instances of generalized cluster algebras from fillings of twist-spuns were first observed in \cite{HughesRoy} for a family of finite-type examples, but the construction provided here gives a systematic means of providing examples of possible generalized cluster phenomena. To our knowledge, these two works comprise the first hints of such phenomena appearing in low-dimensional contact and symplectic topology. The third author, together with Daping Weng, hopes to continue to study generalized cluster phenomenon in this context in future work.

The connections between cluster theory and contact geometry highlighted above also hint at a possible converse for Theorem~\ref{thm:intro_twist-spun_fillings}. Specifically, Theorem~\ref{thm: intro_WSCs} implies that no cluster seed of $\Gr(k, n)$ comprised entirely of Pl\"ucker coordinates is fixed by the action of $\rho^\ell$. This provides a possible obstruction for fillings of $\Sigma_{\psi^\ell}(\La(k, n-k))$ that are smoothly the mapping torus of the $\psi^\ell$ action on a filling $L$ of $\La(k, n-k)$. A generalization of \cite[Conjecture 1.16]{Hughes2021}---see also Remark 6.23 in loc.\ cit.---would then lead us to expect the following:

\begin{conjecture}\label{conj: intro_converse}
The twist spun $\Sigma_{\psi^\ell}(\La(k, n-k))$ is orientably exact Lagrangian fillable if and only if $k$ is congruent to $-1, 0,$ or $1$ modulo $d$.
\end{conjecture}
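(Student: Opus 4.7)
The forward implication is exactly \Cref{thm:intro_twist-spun_fillings}, so the conjectural content lies in the converse. The plan is to derive a contradiction under the assumption that $F$ is an orientable exact Lagrangian filling of $\Sigma_{\psi^\ell}(\La(k, n-k))$ with $k$ not congruent to $-1, 0,$ or $1$ modulo $d$.

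First, invoke the higher-dimensional generalization of \cite[Conjecture 1.16]{Hughes2021} alluded to in Remark 6.23 of loc.\ cit., which predicts that every orientable exact Lagrangian filling of $\Sigma_{\psi^\ell}(\La(k, n-k))$ is, up to Hamiltonian isotopy, the mapping torus of $\psi^\ell$ acting on some exact Lagrangian filling $L$ of $\La(k, n-k)$. The existence of $F$ thus yields a $\psi^\ell$-symmetric filling $L$ of the underlying Legendrian link. Next, present $L$ as a Legendrian weave via the T-shift procedure of \cite{CLSW2023}, so that $L$ is encoded by a reduced plabic graph $G$. Since T-shift is equivariant under the relevant rotation action (as verified in the symmetric setting central to this paper), the $\psi^\ell$-symmetry of $L$ translates to $\rho^\ell$-symmetry of $G$, and the dual plabic tiling then provides a $\rho^\ell$-symmetric maximal weakly separated collection $D \subseteq \binom{[n]}{k}$ of size $k(n-k)+1$. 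This directly contradicts \Cref{thm: intro_WSCs}.

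An alternative packaging of the same obstruction bypasses the mapping torus reduction and works entirely in cluster-theoretic language: it would suffice to show that $F$ induces a toric chart on $\FM(\Sigma_{\psi^\ell}(\La(k, n-k)))$ whose pullback to $\FM(\La(k, n-k)) \cong \Gr(k, n)$ under folding is a $\rho^\ell$-invariant cluster seed consisting of Pl\"ucker coordinates. The consequence of \Cref{thm: intro_WSCs} noted above then forbids such a seed when $k$ is not congruent to $-1, 0,$ or $1$ modulo $d$.

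The main obstacle is the structural input of the first step, and its Pl\"uckerness analog in the alternative approach. Both remain unresolved; even the one-dimensional original \cite[Conjecture 1.16]{Hughes2021} is open. Progress is likely to require either new rigidity results for Lagrangians in the relevant Weinstein manifolds adapted to this symmetric setting, or a refinement of microlocal sheaf invariants sensitive enough to the $\rho^\ell$-action on $\FM(\La(k, n-k))$ to detect non-mapping-torus fillings of the twist-spun.
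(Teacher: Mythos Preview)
The paper does not prove this statement; it is labeled and presented as a conjecture, with only heuristic motivation given in the surrounding paragraph. Your proposal correctly recognizes this: you identify the forward direction as \Cref{thm:intro_twist-spun_fillings}, flag the converse as ``conjectural content,'' and explicitly state that the structural inputs you would need ``remain unresolved.'' In that sense there is nothing to compare your argument against---there is no proof in the paper---and your outline of a strategy tracks the paper's own motivation closely: both invoke a higher-dimensional extension of \cite[Conjecture~1.16]{Hughes2021} to reduce to mapping-torus fillings, and both point to \Cref{thm: intro_WSCs} (the absence of $\rho^\ell$-fixed Pl\"ucker seeds) as the expected obstruction.

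One gap in your first route that you do not flag, beyond the mapping-torus conjecture itself: even if a filling $F$ is Hamiltonian isotopic to a mapping torus $L\times_{\psi^\ell}S^1$, it is not known that an arbitrary exact Lagrangian filling $L$ of $\La(k,n-k)$ can be presented as a Legendrian weave obtained via T-shift of a reduced plabic graph. The paper emphasizes precisely this limitation (see the remark after \Cref{prop: T-shift_weave} and the final paragraph of the introduction): T-shift does not produce all weave fillings, and weaves are not known to realize all cluster seeds outside finite and affine type. So the step ``present $L$ as a Legendrian weave via the T-shift procedure of \cite{CLSW2023}'' smuggles in a second open hypothesis. Your alternative cluster-theoretic packaging is actually closer to how the paper frames the heuristic, since it isolates the Pl\"ucker-seed obstruction directly and leaves the non-Pl\"ucker case as part of what remains open.
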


We conclude this introduction by discussing certain limitations of our construction. As in \cite{PasqTZ2019}, the algorithm we give to produce the weakly separated collections of Theorem~\ref{thm: intro_WSCs} does not produce all possible $\rho^\ell$-symmetric weakly separated collections of ${[n]\choose k}$, though different choices of input data may produce distinct outputs; see \cite[Remark 5.5]{PasqTZ2019} for more details. Similarly, as is already well known, not every cluster seed of $\Gr(k, n)$ arises from a plabic graph or weakly separated collection \cite{Fomin7}. This partly motivates our focus on Legendrian weaves, as they are able to realize non-Plucker cluster variables, hence realizing strictly more seeds than plabic graphs; see \cite[Remark 1.1]{CLSW2023} for further discussion. However, the question of whether Legendrian weaves realize all possible cluster seeds is still unknown outside of finite and affine Dynkin types \cite{Hughes2021, ABL22}. We do not attempt to answer it in this setting.

The remainder of this paper is organized as follows. In Section~\ref{sec: geometric_background} we give the necessary background on Legendrian knots and weaves, and we present the twist-spinning construction. In Section~\ref{sec: combinatorics_background}, we define weakly separated collections and plabic tilings, and describe the T-shift algorithm. Section~\ref{sec: Construction} describes the process of producing a filling of a twist-spun Legendrian given the input data of a $\rho^\ell$-symmetric maximal weakly separated collection. Finally, in Section~\ref{sec: algorithm}, we describe an algorithm for producing such $\rho^\ell$-symmetric maximal weakly separated collections of Theorem~\ref{thm: intro_WSCs} and verify that they exhibit the necessary properties. 

\subsection*{Acknowledgments}
This work began as an undergraduate research project through the Math+ program at Duke University, and we wish to thank the organizers, Heekyoung Hahn and Lenny Ng, for providing an enriching experience for the participants. We are also grateful to Jiajie Ma, who served as the graduate student mentor for our project and was instrumental in making sure everything ran smoothly. The Math+ program is supported by the Department of Mathematics at Duke, the Rhodes Information Initiative at Duke, and Duke's Office of the Dean of Academic Affairs. JH would also like to thank Daping Weng for his interest in the project and for sharing useful insights.

\section{Contact-geometric background}\label{sec: geometric_background}
We begin with the necessary background on Legendrian links and their exact Lagrangian fillings.
 The standard contact structure $\xi_{st}$ in $\R^3$ is the 2-plane field given as the kernel of the 1-form $\alpha_{\st}=dz-ydx$. A link $\La \subseteq (\R^3, \xi_{st})$ is Legendrian if $\La$ is everywhere tangent to $\xi_{st}$. As $\La$ can be assumed to avoid a point, we can equivalently consider Legendrians $\La$ contained in the contact 3-sphere $(\mathbb{S}^3, \xi_{st})$ \cite[Section 3.2]{Geiges08}. We consider Legendrian links up to Legendrian isotopy, i.e. ambient isotopy through a family of Legendrians. 
In this work, we will depict a Legendrian link $\La \subseteq (\R^3, \xi_{st})$ solely via the front projection $\Pi:(\R^3, \xi_{\st}) \to \R^2$ given by $\Pi(x, y, z)=(x, z)$.

The symplectization $\Symp(M, \ker(\alpha))$ of a contact manifold $(M, \ker(\alpha))$ is the symplectic manifold $(\R_t\times M, d(e^t\alpha))$. Given two Legendrian links $\La_-, \La_+\subseteq (\R^3,\xi_{\st})$,  an exact Lagrangian cobordism $L\subseteq \Symp(\R^3, \ker(\alpha_{\st}))$ from $\La_-$ to $\La_+$ is a cobordism $\Sigma$ such that there exists some $T>0$ satisfying the following: 

	\begin{enumerate}
		\item $d(e^t\alpha_{\st})|_\Sigma=0$ 
		\item $\Sigma\cap ((-\infty, T]\times \R^3)=(-\infty, T]\times \La_-$ 
		\item $\Sigma\cap ([T, \infty)\times \R^3)=[T, \infty) \times \La_+$ 
		\item $e^t\alpha_{\st}|_\Sigma=df$ for some function $f: \Sigma\to \R$ that is constant on $(-\infty, T]\times \La_-$ and $[T, \infty)\times \La_+$. 
	\end{enumerate}

	An {\bf exact Lagrangian filling} of the Legendrian link $\La\subseteq (\R^3, \xi_{\st})$ is an exact Lagrangian cobordism $L$ from $\emptyset$ to $\La$ that is embedded in $\Symp(\R^3, \ker(\alpha_{\st}))$. Equivalently, we consider $L$ to be embedded in the symplectic 4-ball with boundary $\partial L$ contained in contact $(\S^3, \xi_{\st})$.

\subsection{Legendrian twist-spun tori}
Let $\La\subseteq (\R^3, \xi_{\st})$ be a Legendrian link and $\varphi_t:\R^3\times [0, 1]\to \R^3$ be a {\bf Legendrian loop} of $\La$. That is, $\varphi_t$ is a Legendrian isotopy satisfying $\La=\varphi_0(\La)=\varphi_1(\La)$. For $t\in [0, 1]$, we obtain an $\S^1$ family of Legendrians $\{\varphi_t(\La)\}$ that is smoothly isotopic to the mapping torus of $\varphi_t$. This allows us to obtain the {\bf twist-spun Legendrian} $\Sigma_\varphi(\La)\subseteq (\R_z\times T^*\R_{x\geq 0}\times T^*S^1, dz-p_x d_x - p_\theta d\theta)$ as the unique Legendrian lift of our $S^1$-family. The canonical identification of $T^*\R_{x\geq 0}\times T^*S^1$ with $T^*\R^2$ via the map $R_{x\geq 0}\times S^1\to \R\times\R\backslash \{0\}$ given by $(x, \theta)\mapsto xe^{i\theta}$ gives a contact embedding $\R_z\times T^*\R_{x\geq 0}\times T^*S^1 \xhookrightarrow{} (\R^5, \xi_{\st})$; see \cite[Section 1.3]{DRG21} for more details. This allows for the following description of twist-spuns:

\begin{definition}
    Let $\varphi_t$ be a Legendrian loop of $\La\subseteq (\R^3, \xi_{\st})$. The twist-spun Legendrian $\Sigma_\varphi(\La)\subseteq (\R^5, \xi_{\st})$ is the union of Legendrian tori $$\La\times[0,1]/\La\times\{0\}\sim \varphi(\La)\times\{1\}.$$
\end{definition}

Given a twist-spun $\Sigma_\varphi(\La)$, one can construct an exact Lagrangian filling in the symplectization $\Symp(\R^5, \xi_{\st})$ by a similar mapping torus construction. In particular, if there exists an exact Lagrangian filling $L$ of $\La$, such that $L\cong L\cup_{\partial L} \{\varphi_t\}$, then the corresponding $\S^1$ family of exact Lagrangians forms a filling of $\Sigma_\varphi(\La)$ by \cite[Proposition 6.2]{HughesRoy}. We denote this filling by $L\times_\varphi S^1$. 

\subsection{Legendrian weaves}\label{sec:weaves}

We now describe Legendrian weaves, a construction of Casals and Zaslow that can be used to produce exact Lagrangian fillings of a Legendrian link \cite{CZ2022}. The key idea of their construction is to combinatorially encode a {\it Legendrian} surface $\w$ in the 1-jet space $J^1(\D^2)=T^*\D^2\times \R_z$ by the singularities of its front projection in $\D^2\times \R_z$. The Lagrangian projection $\pi:T^*\D^2\times \R_Z\to T^*\D^2$ of $\w$ then yields an exact Lagrangian surface in $T^*\D^2$.

More explicitly, we construct a filling of $\La$ by first describing a local model for a Legendrian surface $\w$ in $J^1(\D^2)=T^*\D^2\times \R_z$. We equip $T^*\D^2$ with the symplectic form $d(e^r\alpha)$ where $\ker(\alpha)=\ker(dy_1-y_2d\theta)$ is the standard contact structure on $J^1(\partial \D^2)$ and $r$ is the radial coordinate. This choice of symplectic form ensures that the flow of 
the Liouville vector field defined by the 1-form $e^r\alpha$ is transverse to $J^1(\S^1)\cong \R^2\times \partial \D^2$, thought of as the cotangent fibers along the boundary of the 0-section. The Lagrangian projection of $\w$ is then a Lagrangian surface in $(T^*\D^2, d(e^r\alpha))$. Moreover, since $\w\subseteq (J^1(\D^2), \ker (dz-e^r\alpha))$ is a Legendrian, we immediately obtain the function $z:\pi(\w)\to \R$ satisfying $dz=e^r\alpha|_{\pi(\w)}$, demonstrating that $\pi(\w)$ is exact. 

The boundary of $\pi(\w)$ is taken to be a positive braid $\beta$ in $J^1(\S^1)$  so that we may regard it as a Legendrian link in a contact neighborhood of $\partial \D^2$. As the 0-section of $J^1(\S^1)$ is Legendrian isotopic to a max-tb standard Legendrian unknot, we can take $\partial\pi(\w)$ to equivalently be the standard satellite of the standard Legendrian unknot. Diagramatically, this implies that the braid $\beta$ in $J^1(\S^1)$ can be given as the $(-1)$-framed closure of $\beta$ in $(\R^3,\xi_{\st})$.

\subsubsection{$N$-Graphs and Singularities of Fronts} To construct a Legendrian weave surface $\w$ in $J^1(\D^2)$,we combinatorially encode the singularities of its front projection in a colored graph. 
Local models for these singularities of fronts are classified by work of Arnold \cite[Section 3.2]{ArnoldSing}. The singularities that appear in our construction describe elementary Legendrian cobordisms and are pictured in Figure \ref{fig: wavefronts}.

	\begin{center}
		\begin{figure}[h!]{ \includegraphics[width=.8\textwidth]{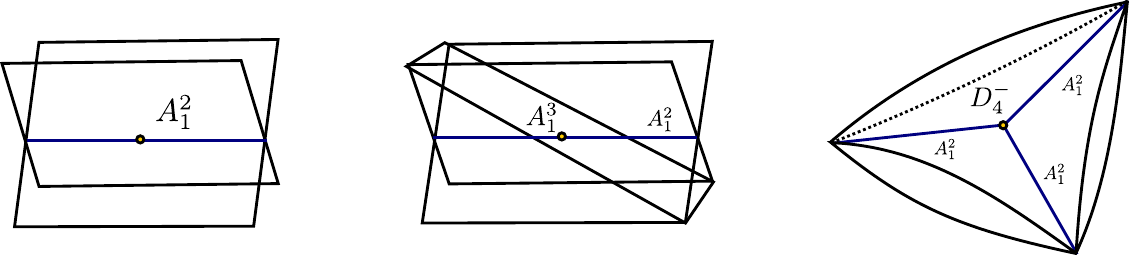}}\caption{Singularities of front projections of Legendrian surfaces. Labels correspond to notation used by Arnold in his classification.}
			\label{fig: wavefronts}\end{figure}
	\end{center}

Since the boundary of our singular surface $\Pi(\w)$ is the front projection of an $N$-stranded positive braid, $\Pi(\w)$ can be pictured as a collection of $N$ sheets away from its singularities. We describe the behavior at the singularities as follows:

\begin{enumerate}
    \item The $A_1^2$ singularity occurs when two sheets in the front projection intersect. This singularity can be thought of as the trace of a constant Legendrian isotopy in the neighborhood of a crossing in the front projection of the braid $\beta\Delta^2$. 
    \item The $A_1^3$ singularity occurs when a third sheet passes through an $A_1^2$ singularity. This singularity can be thought of as the trace of a Reidemeister III move in the front projection.
    \item A $D_4^-$ singularity occurs when three $A_1^2$ singularities meet at a single point. This singularity can be thought of as the trace of a 1-handle attachment in the front projection. 
\end{enumerate}

Having identified the singularities of fronts of a Legendrian weave surface, we encode them by a colored graph $\Gamma\subseteq \D^2$. The edges of the graph are labeled by Artin generators corresponding to the braid word $\beta$. In the interior, we require that any edges labeled $\sigma_i$ and $\sigma_{i+1}$ meet  at a hexavalent vertex with alternating labels while any edges labeled $\sigma_i$ meet at a trivalent vertex. 

To obtain a Legendrian weave $\w(\Gamma)\subseteq (J^1(\D^2),\xi_{\st})$ from an $N$-graph $\Gamma$, we glue together the local germs of singularities according to the edges of $\Gamma$. First, consider $N$ horizontal sheets $\D^2\times \{1\}\sqcup \D^2\times \{2\}\sqcup \dots \sqcup \D^2\times \{N\}\subseteq \D^2\times \R$ and an $N$-graph $\Gamma\subseteq \D^2\times \{0\}$. 
   
    \begin{center}		\begin{figure}[h!]{ \includegraphics[width=.8\textwidth]{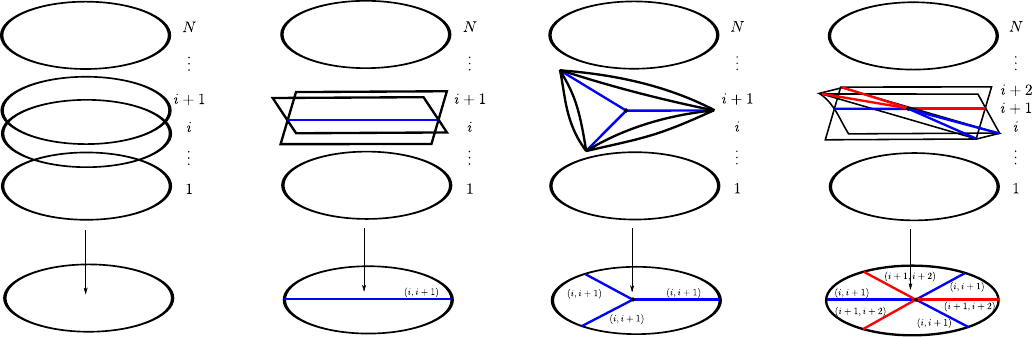}}\caption{The weaving of singularities of fronts along the edges of the $N$-graph. Gluing these local models according to the $N$-graph $\Gamma$ yields the weave $\w(\Gamma)$.}\label{fig:Weaving}\end{figure}
	\end{center}

	If we take an open cover $\{U_i\}_{i=1}^m$ of $\D^2\times \{0\}$ by open disks, refined so that any disk contains at most one of the features depicted in Figure~\ref{fig:Weaving}, we can glue together the corresponding fronts according to the intersection of edges along the boundary of the disks.

	\begin{definition}\label{def: weave}
	    The \textbf{Legendrian weave} $\w(\Gamma)\subseteq (J^1(\D^2), \xi_{st})$ is the Legendrian lift of the front $\Pi(\w(\cup_{i=1}^m U_i))$ given by gluing the local fronts of singularities together according to the $N$-graph $\Gamma$.
	\end{definition}

	The immersion points of a  Lagrangian projection of a weave surface $\w$ correspond precisely to the Reeb chords of $\w$. In particular, if $\w$ has no Reeb chords, then its Lagrangian projection $L(\w)$ is an embedded exact Lagrangian filling of the Legendrian link $\partial\w$. In the Legendrian weave construction, Reeb chords correspond to critical points of functions giving the difference of heights between sheets. Every weave surface in this work admits an embedding where the distance between the sheets in the front projection grows monotonically in the direction of the boundary, ensuring that there are no Reeb chords.

\section{Combinatorial background}\label{sec: combinatorics_background}
In this section, we cover the combinatorial background necessary to explain the construction we use to prove Theorem~\ref{thm:intro_twist-spun_fillings}. We start by describing weakly separated collections of the numbers $1$ through $n$ of size $k$. We then use them to define plabic tilings, and their dual graphs, more commonly known as plabic graphs. Finally, we describe a combinatorial recipe known as T-shift from \cite{CLSW2023} that allows us to obtain a Legendrian weave from these combinatorial data. 

\subsection{Weakly Separated Collections}

Recall that for $n \in \mathbb{N}$, we denote by $[n]$ the set $\{1, \dots, n\}$. We also denote by ${[n] \choose k}$ the set of all $k$-element subsets of $[n]$.

\begin{definition}\label{def:weakly-separated}
Two sets $S$  and $T$  in ${[n] \choose k}$ are \textbf{weakly separated} if there are no numbers $a < b < c < d$ all in $[n]$ such that $a, c \in S \backslash T$ and $b, d \in T \backslash S$. A collection of sets $\{U_i\}$ is called \textbf{weakly separated} if each $U_i$ is pairwise weakly separated.
\end{definition}

See \Cref{ex:WSC} below for an example.
We refer to a weakly separated collection $D\subseteq {[n]\choose k}$ as \textbf{maximal} if it is maximal by inclusion. Note that any maximal weakly separated collection necessarily contains all $n$ intervals of cyclically consecutive numbers $\{i, i+1, \dots, i+k-1\}$.

Given $k$ and $n$, the cardinality of a maximal weakly separated collection is fixed.

\begin{theorem}[Theorem 1.3, \cite{OPS2015}]\label{WSC-Size} A maximal weakly separated collection $D\subseteq {[n]\choose k}$ contains precisely $k(n-k)+1$ elements.
\end{theorem}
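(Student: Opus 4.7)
The plan is to follow the approach of Oh--Postnikov--Speyer via plabic tilings. The proof combines a geometric construction---associating a tiling of the disk to any maximal weakly separated collection---with a short Euler-characteristic count, and the former is the main obstacle.

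First I would associate to $D \subseteq \binom{[n]}{k}$ its plabic tiling $T(D)$. Place $n$ boundary marks on the unit circle in cyclic order, and for each $S \in D$ place a vertex $v_S$ at the centroid of the marks indexed by $S$. Join $v_S$ and $v_T$ by a straight edge whenever $|S \cap T| = k-1$. The weak separation hypothesis directly ensures that two such edges cannot cross transversally: any transverse crossing would produce indices $a < b < c < d$ violating Definition~\ref{def:weakly-separated}. Hence $T(D)$ embeds as a planar graph in the closed disk, and one two-colors its complementary regions: white faces correspond to maximal ``sunflower'' cliques in $D$ sharing a common $(k-1)$-subset, and black faces to cliques sharing a common $(k+1)$-subset.

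The key geometric claim, and the main technical step, is that when $D$ is maximal the complement of $T(D)$ in the closed disk consists only of such white and black faces, so that $T(D)$ is a CW decomposition of the disk. The argument is by contrapositive: if some component of the complement is not of sunflower type, one analyzes the sets labeling vertices along its boundary and extracts a $k$-subset that is weakly separated from every element of $D$, contradicting maximality. This local-to-global step requires a careful case analysis of the possible boundary patterns and is what occupies the bulk of the OPS proof.

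Once the tiling property is established, the cardinality formula follows immediately. The $n$ cyclic intervals $\{i+1, \ldots, i+k\}$ taken modulo $n$ lie in any maximal $D$ and realize the boundary vertices of $T(D)$. An Euler characteristic computation for the disk, or equivalently a bijection between $D$ and the vertex set of a reduced plabic graph for the top cell of $\Gr(k,n)$, yields $|D| = n + (k-1)(n-k-1) = k(n-k)+1$, as claimed.
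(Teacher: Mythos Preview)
The paper does not prove this theorem at all; it is quoted from \cite{OPS2015} (Theorem~1.3 there) and used as a black box throughout. So there is no ``paper's own proof'' to compare your attempt against.

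Your outline is a fair high-level summary of the Oh--Postnikov--Speyer argument, and you correctly identify that the real content is showing the plabic tiling fills the whole $n$-gon when $D$ is maximal (their Proposition~9.4, also cited in this paper just after Definition~\ref{def:plabic-tiling}). Two places where your sketch is looser than the actual proof: first, the edge set of the plabic tiling is \emph{not} ``join $v_S$ and $v_T$ whenever $|S\cap T|=k-1$''---only pairs that are consecutive along the boundary of some clique get an edge (compare Definition~\ref{def:plabic-tiling}), and with your larger edge set the non-crossing claim is false in general. Second, the assertion that weak separation ``directly ensures'' planarity is optimistic; in OPS this is established by a nontrivial argument, not a one-line observation. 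The final enumeration $n+(k-1)(n-k-1)=k(n-k)+1$ is correct once the tiling (equivalently, the dual reduced plabic graph for the top cell) is in hand.
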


Our method for constructing Legendrian weaves uses weakly separating collections as a key tool. We introduce here a notion of symmetry for weakly separated collections that corresponds to rotational symmetry of Legendrian weaves. Given a set $S\subseteq [n]$, we denote by $S +_n \ell$ the subset of $[n]$ obtained from $S$ by adding $\ell$ to every element modulo $n$. 
\begin{definition}\label{def:symmetric-ws-collection}
A weakly separated collection $D$ is called $\rho^\ell$-\textbf{symmetric} if
 for any $I\in D,$ we have that $I+_n \ell$ is also in $D$. 
 
\end{definition}

\begin{ex}\label{ex:WSC}
    The set $D=\{123, 234, 345, 456, 156, 126, 136, 236, 346, 356\}$ is a $\rho^3$-symmetric maximal weakly separated subset of ${[6]\choose 3}$. Note that $123$ is shorthand for the set $\{1,2,3\}$. 
\end{ex}

\subsection{Plabic graphs and plabic tilings}
We now introduce plabic graphs and plabic tilings.
Plabic graphs are planar bicolored graphs originally introduced in \cite{postnikov2006} that have a rich combinatorial structure tied to cluster algebras and positroid varieties. We follow the same conventions as \cite{CLSW2023}. 

\begin{definition}\label{def: plabic_graph}
Let $\D_n$ be the unit disk with $n$ marked points on the boundary
A {\bf plabic graph} $G$ on $\D_n$ is a planar graph embedded in $\D_n$ such that: \begin{enumerate}
    \item each marked point of $\D_n$ is a boundary vertex of $G$,
    \item each internal (non-boundary) vertex of $G$ is colored black or white, and
    \item each boundary vertex is adjacent to a unique internal vertex of $G$.     
\end{enumerate}
\end{definition}

The faces of a plabic graph $G$ are the connected components of $\D_n\backslash G$. One can label each face by the elements of a weakly separated collection to represent cluster seeds of positroid varieties. The assignment of these face labels $\mathcal{F}(G)$ is often achieved via a collection of curves associated to $G$ called zig-zag strands. These combinatorial data are fixed by a collection of local moves.

\begin{definition}\label{def:plabic-equivalence}
We consider plabic graphs to be equivalent under the following changes: 
\begin{enumerate}
    \item deletion of a vertex from a pair of bivalent vertices, as in Figure~\ref{fig: plabic_equivalences} (left);
    \item contraction of adjacent vertices with the same color, as in Figure~\ref{fig: plabic_equivalences} (right). 
\end{enumerate}
\end{definition}

\begin{figure}[h!]{ \includegraphics[width=.8\textwidth]{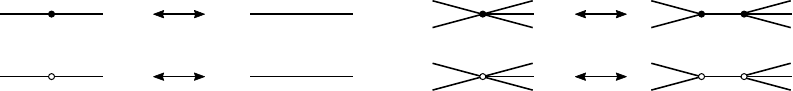}}\caption{Local pictures of equivalence moves on plabic graphs.}
			\label{fig: plabic_equivalences}\end{figure}

We also introduce the square move, which switches the coloring of vertices in a bipartite square; see Figure~\ref{fig: square_move}. The square move is sometimes included as an equivalence of plabic graphs, as it preserves a permutation associated to the zig-zag strands of $G$. It does not, however, preserve the face labels $\mathcal{F}(G)$ and we do not consider it to be an equivalence move in this work.  

        \begin{figure}[h!]{ \includegraphics[width=.3\textwidth]{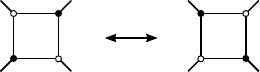}}\caption{Local picture of a square move on a plabic graph.}
			\label{fig: square_move}\end{figure}

We assume that all of our plabic graphs are reduced, meaning that they do not contain any monogons or bigons up to equivalence and applications of square moves; see \cite[Definition 7.1.6]{Fomin7} for a precise definition. We further assume that our plabic graphs have no lollipops, i.e., no internal vertices of degree 1. This is equivalent to assuming that our weakly separated collection $D\subseteq {[n]\choose k}$ is maximal and corresponds to a cluster seed in the Grassmannian $\Gr(k, n)$ rather than a seed in a positroid variety of lower dimension. 

 While it is relatively straightforward to obtain the face labels of a plabic graph from its zig-zag strands, our construction proceeds in the opposite direction: from a weakly separated collection, we wish to produce a plabic graph and a corresponding Legendrian weave. To that end, we describe an intermediate combinatorial object known as a plabic tiling, originally defined in \cite{OPS2015}, in order to pass from weakly separated collections to plabic graphs. We start by defining some auxiliary sets associated to a weakly separated collection. 

\begin{definition}\label{def:white-black-cliques}
Given a maximal weakly separated collection $D \subseteq \binom{[n]}{k}$, define for all $K \in \binom{[n]}{k-1}$, the associated \textbf{white clique} $\mathcal{W}(K) = \{S \in D \mid K \subseteq S\}$. Similarly, for all $L \in \binom{[n]}{k+1}$, define the \textbf{black clique} to be $\mathcal{B}(L) = \{S \in D \mid S \subseteq L\}$.
\end{definition}

White cliques are necessarily of the form $\mathcal{W}(K)=\{Ka_1, Ka_2, \dots, Ka_r\}$ for $a_1<a_2<\dots < a_r\in [n]$. Similarly, black cliques are necessarily of the form $\mathcal{B}(L)=\{L\backslash b_1, L\backslash b_2, \dots L\backslash b_s\}$ for $b_1<b_2< \dots < b_s\in [n]$. We call a (white or black) clique nontrivial if it contains at least three elements.

\begin{ex}
    The nontrivial white cliques of the weakly separated collection from \Cref{ex:WSC} correspond to the following 2-element subsets of $[6]$: $23, 56, 16, 34$, and $36$. The white clique $\mathcal{W}(36)$ is the set $\{136, 236,346, 356\}$. The nontrivial black cliques correspond to the following 4-element subsets of $[6]$: $1236, 3456, 2346, 1356.$ The black clique $\mathcal{B}(1356)$ is the set $\{136, 156, 356\}$. 
\end{ex}

The boundary of a nontrivial white clique $\mathcal{W}(K)$ is the following collection of pairs of elements in $\mathcal{W}(K):$ 
\begin{equation*}
    \partial \mathcal{W}(K)=\{(Ka_1, Ka_2), (Ka_2, Ka_3),\dots, (Ka_{r-1}, Ka_r), (Ka_r, Ka_1)\}
\end{equation*}
Similarly, the boundary of a nontrivial black clique $\mathcal{B}(L)$ is the following collection of pairs of elements in $\mathcal{B}(L):$

\begin{equation*}
    \partial \mathcal{B}(L)=\{(L\backslash b_1, L\backslash b_2), (L\backslash b_2, L\backslash b_3),\dots, (L\backslash b_{s-1}, L\backslash b_s), (L\backslash b_s, L\backslash b_1)\}
\end{equation*}

We can now define a 2-dimensional CW complex $\Sigma(D)$ associated to a maximal weakly separated collection $D$ following \cite{OPS2015}, with a slight modification introduced by \cite{PasqTZ2019} to take into account possible symmetries of $D$. Let $v_1, \dots, v_n \in \R^2$ be the vertices of a regular $n$-gon centered at the origin. For $S\subseteq [n]$, we denote $V_S=\sum_{i\in S} v_i$.  
\begin{definition}\label{def:plabic-tiling}
Given a maximal weakly separated collection $D$, a \textbf{plabic tiling} $\Sigma(D) \in \mathbb{R}^3$ has 
\begin{itemize}
    \item vertices given by $\{v_s \mid s \in D\}$,
    \item edges connecting $v_s$ and $v_t$ if $(s, t)$ appears in the boundary of some clique
    \item faces given by the condition that their boundary vertices form a nontrivial clique.
\end{itemize}
\end{definition}

See Figure~\ref{fig:plabic_tiling_ex} for an example. Note that, by \cite[Proposition 9.4]{OPS2015}, when $D$ is maximal, $\Sigma(D)$ forms a polygonal tiling of the convex $n$-gon with boundary vertices labeled by cyclic $k$-element interval subsets of $[n]$.

\begin{figure}
    \centering
    \includegraphics[width=0.45\linewidth]{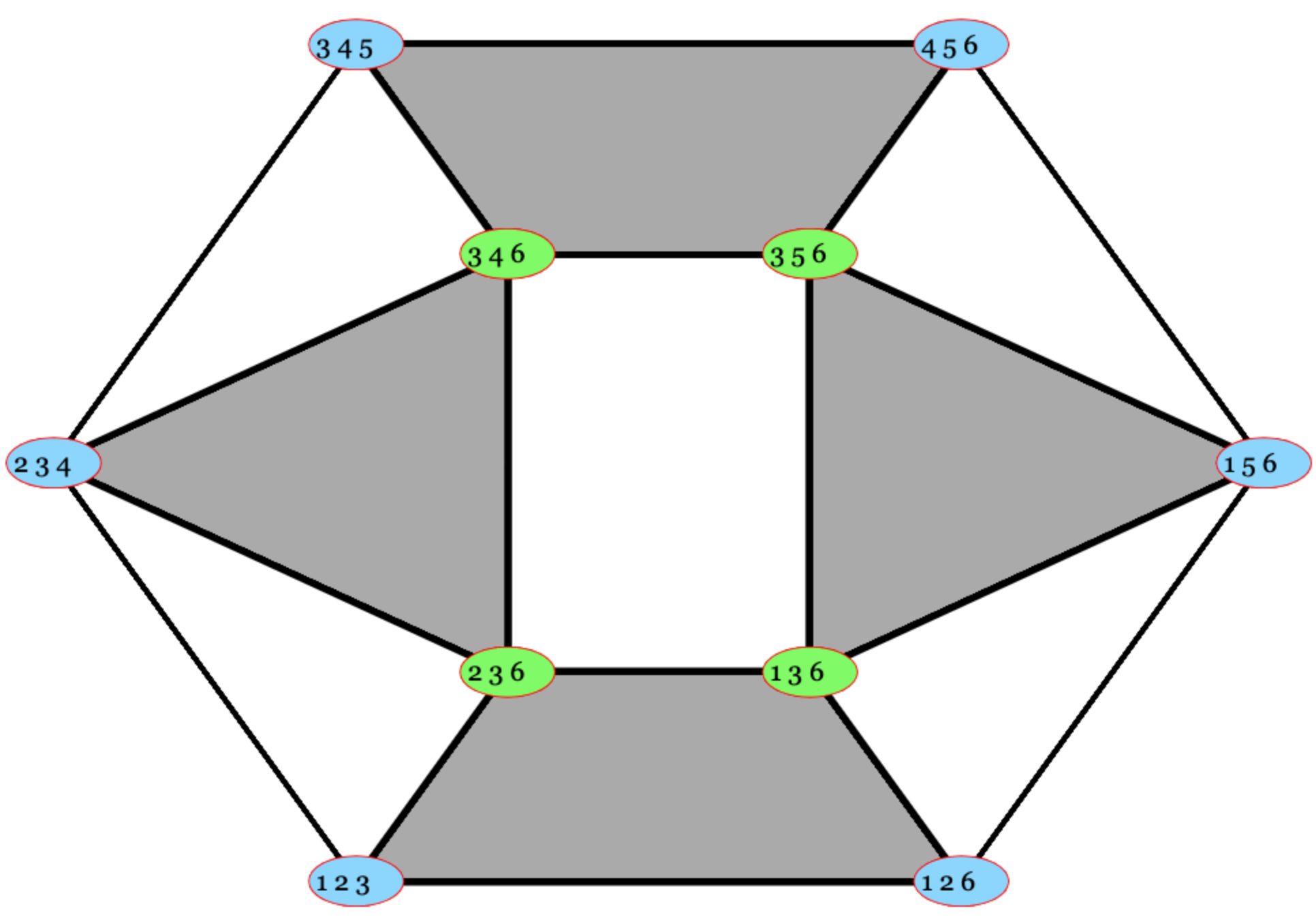}
    \caption{Plabic tiling $\Sigma(D)$ corresponding to the weakly separated collection $D$ from \Cref{ex:WSC}. Figure produced using \cite{PlabicTilingCalc}.}
    \label{fig:plabic_tiling_ex}
\end{figure}

From the construction of $\Sigma(D)$, we can see that the dual graph $G_D$ forms a plabic graph with white vertices dual to white cliques and black vertices dual to black cliques. The labels on the vertices of $\Sigma(D)$ correspond to the face labels of $G_D$.

\subsection{Grassmannians and the cyclic shift automorphism}\label{sub: Grassmannians}

Let $\Gr(k,n)$ denote the space of complex $k$-dimensional subspaces of $\C^n$ and $\widetilde{\Gr}(k, n)$ the corresponding affine cone. Given a point $X\in \Gr(k, n)$, thought of as the row span of a $k\times n$ matrix, the Pl\"ucker embedding of $\Gr(k, n)$ describes $X$ by its $k\times k$ minors. We denote by $\Delta_{I}$ with $I\in {[n]\choose k}$ the corresponding Pl\"ucker function. By \cite{Scott06}, the coordinate ring $\C[\widetilde{\Gr}(k, n)]$ is a cluster algebra with Pl\"ucker coordinates $\Delta_{I}$ appearing as a subset of the cluster variables. The reader unfamiliar with cluster algebras can consult \cite{FWZ1} for a detailed introduction, though we largely avoid any technicalities related to them in this work. 

If we view the elements of a weakly separated collection as indices of Pl\"ucker coordinates, then the combinatorics of plabic graphs give ways to understand the cluster structure on $\C[\widetilde{\Gr}(k, n)]$. In particular, we can relate the notion of rotational symmetry of a plabic graph or plabic tiling to a particular operation on $\widetilde{\Gr}(k, n)$ known as the cyclic shift automorphism. This automorphism, commonly denoted by $\rho$, acts on the Grassmannian $\widetilde{\Gr}(k, n)$ by shifting the indices of Pl\"ucker coordinates: $\Delta_{I}\mapsto \Delta_{I+_n 1}$. 
The action preserves the cluster structure, and is therefore a cluster automorphism \cite{fraser2018braid}. 

In our context, under the identification of weakly separated collections with Pl\"ucker coordinates, the cyclic shift acts by $I\mapsto I+_n 1$ for $I\in D$ a $k$-element set. As the boundary vertices of a plabic tiling (hence the boundary faces of a plabic graph) are labeled by $k$-element consecutive sets, this has the effect of rotating the plabic tiling (hence the dual plabic graph) by $2\pi/n$. Thus, a $\rho^\ell$-symmetric weakly separated collection fixed corresponds to a plabic graph with $2\pi \ell /n$ rotational symmetry. 

\subsection{T-shift of plabic graphs}
We now give a recipe for constructing a Legendrian weave from a trivalent plabic graph following \cite{CLSW2023}. Note that for plabic graphs without lollipops, one can apply the moves in Definition~\ref{def:plabic-equivalence} to show that any plabic graph is move equivalent to a trivalent plabic graph; see e.g. \cite[Lemma 7.4.2]{Fomin7}.

\begin{definition}\label{def:T-shift}
Given a trivalent plabic graph $G$, we define the $\bm{T}$\textbf{-shift} $G^\downarrow$ to be the trivalent plabic graph constructed via the following steps:
\begin{enumerate}
    \item For each marked point $i$ in $\partial \D$, add a new marked point $i'$ slightly counterclockwise of $i$.
    \item Replace every internal black trivalent vertex with a white trivalent vertex.
    \item Place a black vertex $u$ in each face of $G$. For each white vertex $v$ added in Step 2 that appears on the boundary of the face, draw an edge connecting it to $u$. For every $u$ in a boundary face, add an edge from the bordering marked point $i'$ to $u$.
    \item Delete every black vertex of degree 2. For every black vertex of degree $d\geq 3$, arbitrarily expand it into a trivalent tree with $d$ leaves. 
\end{enumerate}
\end{definition}

We refer to the rank of $G$ as the size of a face label $I\in \mathcal{F}(G)$. Note that this quantity is well-defined, as $\mathcal{F}(G)\subseteq {[n] \choose k}$.

\begin{proposition}[Proposition 3.3, \cite{CLSW2023}]
    If $G$ is a reduced trivalent plabic graph, then $G^\downarrow$ is also a reduced trivalent plabic graph with rank one less than that of $G$.
\end{proposition}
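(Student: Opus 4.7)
The plan is to verify in order the four properties claimed for $G^\downarrow$: that it is (i) a plabic graph on $\D_n$, (ii) trivalent, (iii) of rank one less than $G$, and (iv) reduced.

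Properties (i) and (ii) follow essentially by construction. The boundary vertices of $G^\downarrow$ are the new marked points $\{i'\}$ from Step 1, each adjacent (by Step 3) to a unique internal black vertex, namely the one placed in the boundary face of $G$ containing $i$; all remaining internal vertices are colored by Steps 2 and 3, and planarity is preserved since each step is local. For trivalency, each new white vertex inherits degree $3$ from the trivalent black vertex of $G$ it replaces, because that vertex bordered exactly three faces and therefore acquires three new edges to new black vertices in Step 3. The new black vertices may have arbitrary degree, but Step 4 explicitly deletes bivalent ones and refines higher-degree ones into trivalent trees.

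For (iii), I would identify the faces of $G^\downarrow$ with the white vertices of $G$. In the plabic tiling dual picture, a white vertex of $G$ corresponds to a white clique $\mathcal{W}(K)$ with $K \in \binom{[n]}{k-1}$. A direct local check around such a white vertex shows that the face of $G^\downarrow$ surrounding the corresponding region is bounded by edges connecting the new white vertices (former black trivalent vertices of $G$) in $\mathcal{B}(L)$-cliques meeting at $\mathcal{W}(K)$; the natural face label is then precisely $K$, and adjacent faces of $G^\downarrow$ differ by a single element in the expected way. Hence the rank of $G^\downarrow$ is $k-1$.

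The main obstacle is (iv), reducedness. My strategy is to track zig-zag strands: in a trivalent plabic graph, each zig-zag strand turns maximally left at white vertices and maximally right at black vertices, so each strand of $G$ determines a well-defined strand of $G^\downarrow$ by reinterpreting its behavior under the recoloring in Step 2 and the new incidences in Steps 3-4. The T-shift should induce a bijection between strands of $G$ and $G^\downarrow$ whose trip permutation is a cyclic rotation of that of $G$. Reducedness of $G^\downarrow$ then follows from reducedness of $G$ via the standard characterization that no zig-zag strand is a closed loop and no two strands form a bad double-crossing, together with the fact that monogons and bigons arise precisely from such strand pathologies; cf. the criteria in \cite{Fomin7}. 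A secondary check would verify that $G^\downarrow$ has $(k-1)(n-k+1)+1$ internal faces via an Euler-characteristic computation from the data of $G$, which matches Theorem \ref{WSC-Size} and corroborates reducedness in rank $k-1$.
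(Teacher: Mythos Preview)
This proposition is not proved in the paper at all: it is quoted verbatim as Proposition~3.3 of \cite{CLSW2023} and used as a black box, so there is no ``paper's own proof'' to compare against. Your sketch is therefore an independent attempt at a result the authors simply import.

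As a sketch it is broadly reasonable, but two points deserve flagging. First, in part~(i)--(ii) you tacitly assume that the original edges of $G$ and the original white vertices of $G$ are discarded in $G^\downarrow$; this is indeed how the construction in \cite{CLSW2023} works, but the wording of Definition~\ref{def:T-shift} here does not say so explicitly, and your argument for trivalency of the new white vertices only goes through once this is made precise (otherwise those vertices would have degree~$6$). Second, your argument for~(iv) via a bijection of zig-zag strands and a cyclic rotation of the trip permutation is the right idea and is essentially the route taken in \cite{CLSW2023}, but the claim that ``each strand of $G$ determines a well-defined strand of $G^\downarrow$'' hides the real work: one must analyze how strands traverse the trivalent trees produced in Step~4 from high-degree black vertices, check that different choices of tree do not affect the trip permutation, and then verify the reducedness criteria from \cite{Fomin7} directly. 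Your Euler-characteristic face count is a pleasant sanity check but does not by itself imply reducedness.
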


We now describe how to use repeated iteration of T-shift to produce a Legendrian weave. Fixing $G$ with $\mathcal{F}(G)\subseteq {[n]\choose k}$, we denote by $G_1=G^\downarrow, G_2, \dots, G_{k-1}$ the plabic graphs obtained from repeatedly applying T-shift to $G$. Label each $G_i$ by $\sigma_i$ and denote their union by $\w(G)=\cup_{i=1}^{k-1} G_i$. 
\begin{proposition}[Proposition 3.7, \cite{CLSW2023}]\label{prop: T-shift_weave}
    Given a plabic graph $G$ with face labels $\mathcal{F}(G)\subseteq {[n]\choose k}$, the graph $\w(G)$ is a $k$-graph encoding a Legendrian weave embedded in $J^1(\D^2)$ with boundary braid $(\sigma_1\dots \sigma_{k-1})^n$.
\end{proposition}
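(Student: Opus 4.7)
The plan is to verify that $\w(G)$ satisfies the combinatorial definition of a $k$-graph given in Section~\ref{sec:weaves}, and that its boundary braid is the promised word $(\sigma_1 \cdots \sigma_{k-1})^n$. Since by the cited Proposition~3.3 each $G_i$ is a reduced trivalent plabic graph of rank $k-i$, with edges labeled by $\sigma_i$, the union $\w(G)$ is a finite edge-colored planar graph in $\D^2$ whose edge labels lie in $\{\sigma_1, \dots, \sigma_{k-1}\}$. The remaining work is checking the local vertex structure and the boundary word.

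For the vertex structure, I would examine each internal vertex of $\w(G)$ by the set of layers $G_i$ in which it participates. A vertex lying in only one layer $G_i$ is trivalent in $G_i$ by Proposition~3.3, with all three edges labeled $\sigma_i$, matching the trivalent condition of Section~\ref{sec:weaves}. A vertex $v$ appearing in both $G_i$ and $G_{i+1}$ has three $\sigma_i$-edges inherited from $G_i$ and, by Steps 2 and 3 of Definition~\ref{def:T-shift}, three $\sigma_{i+1}$-edges connecting $v$ to the new black face-centers placed in the three faces of $G_i$ surrounding $v$. Because each new face-center sits inside a face bounded by two consecutive $\sigma_i$-edges at $v$, the new $\sigma_{i+1}$-edges interleave cyclically with the original $\sigma_i$-edges, yielding a hexavalent vertex with alternating colors as required. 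Edges of non-adjacent colors $\sigma_i$ and $\sigma_j$ with $|i-j| \geq 2$ may generically cross in $\D^2$, but such crossings are not combinatorial vertices of $\w(G)$ and are permitted since the corresponding Artin generators commute.

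For the boundary braid, I would track the marked points introduced by Step 1 of Definition~\ref{def:T-shift}. Iterating T-shift $k-1$ times subdivides each arc of $\partial \D^2$ between two consecutive original marked points of $G$ by $k-1$ new marked points, whose incident boundary edges carry labels $\sigma_1, \sigma_2, \dots, \sigma_{k-1}$ in counterclockwise order. Reading the labels of these boundary edges over all $n$ arcs produces the word $(\sigma_1 \cdots \sigma_{k-1})^n$, which under the identification of boundary edges of a $k$-graph with braid generators in $J^1(\S^1)$ from Section~\ref{sec:weaves} is the boundary braid of the Legendrian weave $\w(G)$.

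The principal obstacle is the local verification that every persistent vertex of $\w(G)$ really is hexavalent with correctly alternating colors, in every face configuration that can arise. One must separately handle the case where Step 4 of Definition~\ref{def:T-shift} deletes a bivalent black face-center---in which case the two $\sigma_{i+1}$-edges incident to it merge into a single edge without altering the alternation at either endpoint---and the case where Step 4 expands a black face-center of valence $d \geq 3$ into a trivalent tree, which introduces additional trivalent $\sigma_{i+1}$-vertices interior to a single face of $G_i$. The new auxiliary vertices automatically satisfy the trivalent condition, but one must check that their tree embedding lies wholly inside a face of $G_i$ and does not disturb the alternation at existing hexavalent vertices on the face boundary. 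These checks are local and combinatorial, but constitute the bulk of the technical verification.
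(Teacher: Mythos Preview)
The paper does not supply its own proof of this proposition: it is quoted verbatim as Proposition~3.7 of \cite{CLSW2023} and used as a black box. So there is no ``paper's own proof'' to compare against; the authors simply import the result.

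That said, your sketch is a sensible reconstruction of why the statement holds. The identification of the two vertex types---white trivalent vertices of $G_i$ remaining trivalent with only $\sigma_i$-edges, and black trivalent vertices of $G_i$ becoming white in $G_{i+1}$ and acquiring three interleaving $\sigma_{i+1}$-edges to the new face-centers---is the heart of the local check, and your reading of the boundary word from the iterated insertion of marked points is correct. The caveats you flag in your final paragraph (bivalent deletions, trivalent-tree expansions in Step~4, and crossings of non-adjacent colors) are exactly the cases that need care in a full argument, and you have located them accurately. If you wanted to turn this into a self-contained proof you would also need to argue that no vertex lies in more than two consecutive layers $G_i, G_{i+1}$, which follows because only \emph{black} vertices of $G_i$ persist (as white) into $G_{i+1}$, and these are then white there, hence do not persist into $G_{i+2}$.
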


See Figure~\ref{fig:TshiftEx} for the T-shift construction applied to our running example of a $\rho^3$-symmetric weakly separated collection $D\subseteq {[6]\choose 3}$. 

\begin{figure}[h]
    \centering
\includegraphics[width=.7\linewidth]{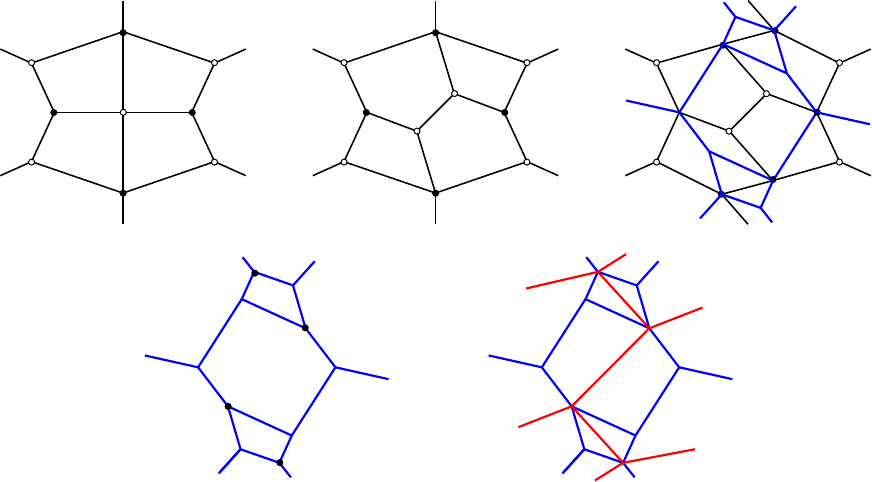}\caption{ The $T$-shift construction applied to the plabic graph dual the plabic tiling from Example \ref{fig:plabic_tiling_ex}. The top row depicts the process of obtaining $G^\downarrow_D$, while the bottom row depicts the second iteration of this process, which results in a Legendrian weave filling of $\La(3,3)$.}
\label{fig:TshiftEx}
\end{figure}

\begin{remark} Note that, as observed in \cite{CLSW2023}, the T-shift procedure does not produce all possible Legendrian weave fillings of $\La(k, n-k)$; see Remark 1.1 of loc.\ cit. for further details. 
\end{remark}

\section{Constructing fillings of twist-spun torus links}\label{sec: Construction}

In this section, we prove Theorem~\ref{thm:intro_twist-spun_fillings} assuming the existence of a $\rho^\ell$-symmetric maximal weakly separated collections, which we then construct in Section~\ref{sec: algorithm}. We start by verifying that $\rho^\ell$-symmetric weakly separated collections yield rotationally symmetric plabic tilings, and hence, rotationally symmetric Legendrian weaves.

\begin{lemma}\label{lemma:symmetric_plabic_tiling}
    Let $D\subseteq {[n] \choose k}$ be a $\rho^\ell$-symmetric maximal weakly separated collection. The plabic tiling $\Sigma(D)$ has $2\pi\ell/{n}$ rotational symmetry and is fixed by the action of $\rho^\ell$. 
\end{lemma}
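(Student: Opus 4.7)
The plan is to unwind the definitions: the vertices of $\Sigma(D)$ sit at points $V_S = \sum_{i \in S} v_i$ built from the vertices of a regular $n$-gon, so geometric rotation by $2\pi\ell/n$ acts on these points in a very predictable way, and the cliques that carry the combinatorial structure also transform predictably under adding $\ell$ modulo $n$. The $\rho^\ell$-symmetry of $D$ is exactly what is needed to close the loop.

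First I would verify the vertex-level statement. Let $R$ denote rotation of $\R^2$ by $2\pi\ell/n$ about the origin. Since the $v_i$ are the vertices of a regular $n$-gon centered at $0$, we have $R(v_i) = v_{i +_n \ell}$, and hence
\begin{equation*}
R(V_S) = R\Bigl(\sum_{i \in S} v_i\Bigr) = \sum_{i \in S} v_{i +_n \ell} = V_{S +_n \ell}.
\end{equation*}
Because $D$ is $\rho^\ell$-symmetric, $S \in D$ if and only if $S +_n \ell \in D$, so $R$ permutes the vertex set $\{V_S : S \in D\}$ of $\Sigma(D)$.

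Next I would show that the white and black cliques are interchanged in the obvious way by $\rho^\ell$. Directly from Definition~\ref{def:white-black-cliques}, for any $K \in \binom{[n]}{k-1}$,
\begin{equation*}
\mathcal{W}(K +_n \ell) = \{S \in D : K +_n \ell \subseteq S\} = \{T +_n \ell : T \in D,\ K \subseteq T\},
\end{equation*}
using $\rho^\ell$-symmetry of $D$ in the last equality. So $R$ sends the vertices of $\mathcal{W}(K)$ bijectively to the vertices of $\mathcal{W}(K +_n \ell)$, and the same argument with inclusions reversed handles black cliques $\mathcal{B}(L)$.

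Finally I would check that edges and $2$-cells are respected. Writing $\mathcal{W}(K) = \{K a_1, \ldots, K a_r\}$ with $a_1 < \cdots < a_r$, adding $\ell$ modulo $n$ is a cyclic shift of $\{a_1, \ldots, a_r\}$, so it preserves the cyclic ordering used to define $\partial \mathcal{W}(K)$. Therefore $R$ takes each boundary edge of $\mathcal{W}(K)$ to a boundary edge of $\mathcal{W}(K +_n \ell)$, and the analogous statement holds for black cliques. The same reasoning shows that the $2$-cell bounded by a nontrivial clique is mapped to the $2$-cell bounded by its $\rho^\ell$-shift. Combining these, $R$ is a cellular automorphism of $\Sigma(D)$, which is exactly the statement that $\Sigma(D)$ has $2\pi\ell/n$ rotational symmetry; moreover, under the identification of vertices with their labels, this automorphism coincides with the $\rho^\ell$ action $V_S \mapsto V_{S +_n \ell}$, so $\Sigma(D)$ is fixed by $\rho^\ell$ as a labeled CW complex. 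I do not anticipate a genuine obstacle here: the only point that requires care is confirming that the cyclic order on clique elements is preserved by $+_n \ell$, which is immediate since addition modulo $n$ is a cyclic shift.
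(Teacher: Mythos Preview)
Your proof is correct and follows the same approach as the paper: you verify that rotation by $2\pi\ell/n$ sends each vertex $V_S$ to $V_{S+_n\ell}$, and then use $\rho^\ell$-symmetry of $D$ to conclude that vertices, edges, and faces of $\Sigma(D)$ are preserved. In fact your argument is more detailed than the paper's, which simply asserts the analogous claim for edges and faces after handling the vertices; your explicit check that cliques and their boundary cyclic orders are respected by $+_n\ell$ supplies the details the paper omits.
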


\begin{proof}
For a $\rho^\ell$-symmetric maximal weakly separated collection $D$, we have that for any $I\in D,$ the $k$-element set $I+_n \ell$ is also in $D$. By Definition~\ref{def:plabic-tiling}, the vertices of $\Sigma(D)$ are given by $v_S=\sum_{i\in S} v_i$ where $\{v_i\}_{i=1}^n$ are the vertices of a regular $n$-gon centered at the origin. Therefore, for any vertex $v_I$ of $\Sigma(D)$, we must have that the point $v_{I+_n\ell}=\rho^\ell\cdot v_I$ is also a vertex of $\Sigma(D)$. Similarly, $\rho^\ell$-symmetry of $D$ implies that the $\rho^\ell$ orbits of the edges and faces of $\Sigma(D)$ also lie in $\Sigma(D)$.  

\end{proof}

For a maximal weakly separated collection $D$, we denote by $G_D$ the reduced plabic graph dual to the tiling $\Sigma(D)$. By \cite[Lemma 5.8]{Hughes2024} the induced action of the K\'alm\'an loop $\psi$ on $\FM(\La(k, n-k)$ is identical to the action of the cyclic shift on $\Gr(k, n)$ and corresponds to a $2\pi/n$ rotation of any weave filling $\w$ of $\La(k, n-k)$. We now show that $\rho^\ell$-symmetry of $D$ extends to rotational symmetry of the corresponding Legendrian weave.

\begin{lemma}\label{lemma: symmetric_weave}
For any $\rho^\ell$-symmetric maximal weakly separated collection $D$, the Legendrian weave $\w(G_D)$ is Legendrian isotopic relative to the boundary to $\psi^\ell\cdot \w(G_D)$. 
\end{lemma}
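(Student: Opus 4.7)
The plan is to propagate the rotational symmetry of $\Sigma(D)$ from Lemma~\ref{lemma:symmetric_plabic_tiling} through the T-shift construction, and then identify the resulting geometric rotation with the action of $\psi^\ell$. First, since $G_D$ is defined as the planar dual of $\Sigma(D)$, its combinatorial structure on $\D_n$---vertices, edges, colors, and face labels---is carried to itself by the $2\pi\ell/n$ rotation, so $G_D$ is strictly $\rho^\ell$-symmetric as a reduced plabic graph.

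The main work will be showing that every iterate $G_i$ in the T-shift tower remains $\rho^\ell$-symmetric up to plabic equivalence. I would analyze Definition~\ref{def:T-shift} step by step: Steps (1)--(3) (inserting the new boundary points $i'$, swapping each internal black trivalent vertex for a white one, and placing a black vertex in each face) are entirely canonical and commute with any rotation of $\D_n$. Only Step (4)---expanding each high-valence black vertex into an arbitrary trivalent tree---involves a choice. The hard part will be handling vertices fixed by the rotation (for instance, a high-valence vertex at the center of $\D_n$), where a strictly equivariant trivalent expansion may not exist. The saving observation is that any two trivalent expansions of the same black vertex differ by the contraction move of Definition~\ref{def:plabic-equivalence}(2), so they represent the same equivalence class of reduced plabic graphs. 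Consequently, $G_D^\downarrow$ and $\rho^\ell \cdot G_D^\downarrow$ are both valid outputs of T-shift applied to the $\rho^\ell$-invariant $G_D$, and hence are plabic-equivalent. Inductively applying T-shift $k-1$ times yields $G_i \sim \rho^\ell \cdot G_i$ as reduced plabic graphs for every $i = 1,\dots,k-1$.

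To finish, I would appeal to the fact that plabic equivalences lift to Legendrian isotopies of the corresponding weave surfaces, since the two moves in Definition~\ref{def:plabic-equivalence} each correspond to compactly-supported Legendrian isotopies of the underlying $k$-graph front. Concatenating these local isotopies across all colors $i = 1,\dots,k-1$ produces a Legendrian isotopy in $J^1(\D^2)$ from $\w(G_D)$ to $\rho^\ell \cdot \w(G_D)$. By the result of Hughes quoted immediately before the lemma, the $2\pi\ell/n$ rotation of a weave filling realizes the induced action of $\psi^\ell$ on $\FM(\La(k,n-k))$, and its restriction to the boundary braid is exactly the Kálmán loop $\psi^\ell$ on $\La(k, n-k)$. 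This provides the desired Legendrian isotopy from $\w(G_D)$ to $\psi^\ell \cdot \w(G_D)$, which is rel boundary in the sense that its restriction to $\La(k,n-k)$ is precisely the loop $\psi^\ell$.
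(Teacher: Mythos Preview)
Your proposal is correct and follows essentially the same approach as the paper's proof: both argue that the only non-canonical step in T-shift is the trivalent expansion of high-valence black vertices, that different expansions are related by the contraction equivalence of Definition~\ref{def:plabic-equivalence}(2) (the paper cites \cite[Theorem 3.10(3)]{CLSW2023} for this), and hence that $\w(G_D)$ and its $\rho^\ell$-rotate are Legendrian isotopic. Your version is slightly more explicit about propagating the symmetry inductively through all $k-1$ iterates of T-shift, whereas the paper phrases the argument in a single pass, but the logical content is the same.
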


\begin{proof}
Let $D \subseteq \binom{[n]}{k}$ be a $\rho^\ell$-symmetric maximal weakly separated collection. By Lemma~\ref{lemma:symmetric_plabic_tiling}, $\Sigma(D)$ is fixed by the $2\pi\ell/n$ rotation induced by the action of $\rho^\ell$, hence its dual plabic tiling $G_D$ must be as well. Following Proposition~\ref{prop: T-shift_weave}, we obtain an embedded Legendrian weave $\w(G_D)$ from the plabic graph $G_D$. To verify symmetry of $\w(G_D)$, we examine each black vertex $v\in G_D$ of degree $d > 3$. For each such vertex, we need to choose a trivalent tree $\tau_v$ with $d$ leaves to resolve $v$ into a tree of trivalent vertices. For any such $v$ not fixed by $\rho^\ell$, we simply resolve each vertex in the $\rho^\ell$ orbit of $v$ in exactly the same way. If $v$ is fixed by $\rho^\ell$, then we arbitrarily resolve it into $\tau_v$. Let $\tau$ denote a choice of such resolutions, and let $G_D(\tau)$ be the corresponding T-shifted plabic graph under such resolution. By \cite[Theorem 3.10 (3)]{CLSW2023}, different choices of $\tau$ result in plabic graphs $G_D(\tau)$ that are related by local equivalences. Therefore, $G_D(\tau)$ is related to $\rho^\ell\cdot G_D(\tau)$ by local equivalences, and hence the corresponding Legendrian weaves $\w(G_D(\tau))$ and $\psi^\ell\cdot \w(G_D(\tau))$ are Legendrian isotopic relative to their boundaries.
\end{proof}

\begin{remark}
 While our proof of Theorem~\ref{thm:intro_twist-spun_fillings} below only requires Legendrian weaves fixed by $\psi^\ell$ up to Legendrian isotopy, it appears to be possible to produce a version of a Legendrian weave that is fixed pointwise by using the notion of degenerate $N$-graphs from \cite{ABL22}. In loc.\ cit., the authors slightly expand the class of singularities one is allowed to consider in order to obtain symmetric weaves corresponding to non-simply-laced finite and affine Dynkin type cluster algebras. A similar approach in this setting would involve carefully analyzing the class of singularities that appear in the local isotopies relating the T-shifts of different resolutions of $d$-valent black vertices of $G_D$.
\end{remark}

Using the Legendrian weave described in Lemma~\ref{lemma: symmetric_weave}, we now construct a filling of the twist-spun $\Sigma_{\psi^\ell}(\La(k, n-k))$ in order to prove Theorem~\ref{thm:intro_twist-spun_fillings} assuming the existence of a maximal $\rho^\ell$-symmetric weakly separated collection. 

\begin{proof}[Proof of Theorem~\ref{thm:intro_twist-spun_fillings}]
Let $d=n/\gcd(n, \ell)$ and assume that $k$ is in the set of $\Z/d\Z$ congruence classes represented by $\{0, -1, 1\}$. By Theorem~\ref{thm: intro_WSCs}, we can construct a maximal weakly separated collection $D\subseteq {[n] \choose k}$ fixed by addition of $\ell$ modulo $n$. By Lemma~\ref{lemma: symmetric_weave}, the weave $\w(G)$ obtained from applying T-shift to $G$ is fixed up to Legendrian isotopy by the action of $\psi^\ell$. In order to obtain a filling of $\Sigma_{\psi^\ell}(\La(k, n-k))$, we compose the family of fillings $L\times_{\psi^\ell}[0, 1]$ with the trace of the isotopy between $\w(G)$ and $\psi^\ell\cdot \w(G)$, producing an $\S^1$ family of fillings of $\La(k, n-k)$. Applying \cite[Proposition 6.2]{HughesRoy} to this $\S^1$ family yields the desired filling of $\Sigma_{\psi^\ell}(\La(k, n-k))$.
\end{proof}

\section{Symmetric weakly separated collections.}\label{sec: algorithm}

In this section, we establish our necessary and sufficient condition for the existence of a $\rho^\ell$-symmetric maximal weakly separated collection. We start by showing that our condition is necessary by studying the fixed points of plabic tilings under rotation. We then give an algorithm generalizing \cite[Theorem 1.6]{PasqTZ2019} that constructs a $\rho^\ell$-symmetric maximal weakly separated collection given input data satisfying our condition.

\subsection{Necessary condition}
Throughout this section, we assume that $k\leq \frac{n}{2}$. We can do so without loss of generality because, for any maximal weakly separated collection $D\subseteq{[n]\choose k}$, the weakly separated collection $D'\subseteq{[n] \choose n-k}$ given by $D'=\{[n]\backslash A|  A \in D\}$ produces an isomorphic plabic tiling. The following lemma appears in \cite{PasqTZ2019} as a crucial technical ingredient in the proof of their necessary condition for the existence of a $\rho^k$-symmetric maximal weakly separated collection. We include a proof here for completeness.

\begin{lemma}[Lemma 2.2, \cite{PasqTZ2019}] \label{lemma: PTZ lemma 2.2}
    For $I\in {[n]\choose k}$, if $I=I+_n \ell$, then $d|k$. 
\end{lemma}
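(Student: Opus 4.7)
The plan is to recast the statement as a fact about group actions on the cyclic group $\Z/n\Z$. I would begin by regarding $I \in \binom{[n]}{k}$ as a subset of $\Z/n\Z$ via the identification $[n] \leftrightarrow \Z/n\Z$. Under this identification, the operation $I \mapsto I +_n \ell$ is precisely the action on subsets induced by translation by $\ell$ inside the group $\Z/n\Z$.

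Next, I would invoke the standard description of the orbit structure of translation by $\ell$: this translation generates the cyclic subgroup $\langle \ell \rangle \subseteq \Z/n\Z$, and a short calculation with the fundamental theorem of cyclic groups shows that $|\langle \ell \rangle| = n/\gcd(n,\ell) = d$. Consequently, each orbit of this translation action on $\Z/n\Z$ is a coset of $\langle \ell \rangle$ and therefore has cardinality exactly $d$.

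Finally, the hypothesis $I = I +_n \ell$ says precisely that $I$ is stable under translation by $\ell$, and hence stable under the subgroup generated by $\ell$. This forces $I$ to be a disjoint union of orbits (i.e.\ cosets of $\langle \ell \rangle$), each of size $d$. Therefore $|I| = k$ is a sum of copies of $d$, which gives $d \mid k$ as claimed.

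There is no real obstacle here: the argument is essentially a one-line orbit-counting observation, and the only care needed is bookkeeping the identification $[n] \leftrightarrow \Z/n\Z$ so that ``addition modulo $n$'' is genuinely an action of $\Z/n\Z$ on itself. The proof is short enough that I would write it out in full in the paper rather than appeal to a citation.
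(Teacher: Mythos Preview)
Your proposal is correct and follows essentially the same argument as the paper: both identify $[n]$ with $\Z/n\Z$, observe that the translation-by-$\ell$ map has orbits (equivalently, cosets of $\langle \ell \rangle$) of size exactly $d$, and conclude that a set fixed by this translation is a union of such orbits, forcing $d \mid k$. The only cosmetic difference is that the paper phrases things in terms of the orbits of the map $\phi(a)=a+\ell$ rather than cosets of the subgroup $\langle \ell \rangle$, but these are the same thing.
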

\begin{proof}
    Consider the map $\phi: \Z / n\Z \rightarrow \Z / n\Z $ given by $\phi(a)= a + \ell \pmod n$. The order of $\phi$ is $d$, as this is the smallest integer such that $d \ell \equiv 0 \pmod{n}$, and every element of $\Z / n\Z$ lies in a $\phi$-orbit of size $d$. Since $I$ is a subset of $\Z / n\Z$, and $I = I +_n \ell$, it follows that $I$ is a union of $\phi$-orbits. Since each $\phi$-orbit has size $d$, the cardinality $|I| = k$ must be a multiple of $d$. Thus, $d|k$.
\end{proof}

Our necessary condition now follows as a generalization of \cite[Proposition 2.1]{PasqTZ2019}.

\begin{proposition}\label{prop: necessary}
If $D\subseteq {[n]\choose k}$ is a $\rho^\ell$-symmetric maximal weakly separated collection, then $k \equiv c \pmod {d}$, for $c\in\{-1, 0, 1\}$.
\end{proposition}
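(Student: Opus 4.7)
The plan is to exploit the $2\pi\ell/n$ rotational symmetry of the plabic tiling $\Sigma(D)$ established in Lemma~\ref{lemma:symmetric_plabic_tiling}, and to locate the center of the ambient regular $n$-gon within the tiling. Set $d = n/\gcd(n,\ell)$, which is the order of the planar rotation induced by $\rho^\ell$. If $d \leq 2$ then $\{-1, 0, 1\}$ already exhausts all residues modulo $d$, so there is nothing to prove; thus I would immediately reduce to the case $d \geq 3$. Because $\Sigma(D)$ is a polygonal tiling of the convex $n$-gon by \cite[Proposition 9.4]{OPS2015}, the origin (the unique fixed point of $\rho^\ell$ in $\R^2$) lies in the interior of a unique vertex, edge, or face of $\Sigma(D)$.

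The first step of the argument proper is to rule out the edge case. If the origin lay on the interior of an edge $e$, then $\rho^\ell\cdot e$ would also contain the origin in its interior, and since distinct edges of a polygonal tiling meet only at vertices one would have $\rho^\ell \cdot e = e$. But a nontrivial rotation of order $d \geq 3$ cannot stabilize a line segment through its fixed point, since only rotations of order dividing $2$ preserve a line through the center. Hence the origin must be either a vertex of $\Sigma(D)$ or lie in the interior of a face.

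In the vertex case the origin equals some $v_I$, and the identity $v_I = \rho^\ell\cdot v_I = v_{I +_n \ell}$ forces $I = I +_n \ell$, whence Lemma~\ref{lemma: PTZ lemma 2.2} gives $d\mid k$. In the face case, $\rho^\ell$ stabilizes the containing face setwise and therefore permutes its boundary vertex labels, which form a nontrivial clique. For a white clique $\mathcal{W}(K) = \{K a_1, \ldots, K a_r\}$ with $r \geq 3$, the intersection of all its members is exactly $K$; for a black clique $\mathcal{B}(L) = \{L\setminus b_1, \ldots, L\setminus b_s\}$ with $s \geq 3$, the union is exactly $L$. Since the bijection $I \mapsto I +_n \ell$ on subsets of $[n]$ commutes with intersection and union, the setwise invariance of the clique under $\rho^\ell$ forces $K +_n \ell = K$ in the white case and $L +_n \ell = L$ in the black case. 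Lemma~\ref{lemma: PTZ lemma 2.2} then yields $d\mid (k-1)$ or $d\mid (k+1)$, respectively, completing the three-case conclusion.

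The main obstacle is essentially the geometric step of excluding the edge case and justifying that the stabilized face gives rise to a $\rho^\ell$-fixed subset of size $k\pm 1$. Once those are pinned down, the rest of the proof is a direct application of Lemma~\ref{lemma: PTZ lemma 2.2} and the order computation for $\rho^\ell$.
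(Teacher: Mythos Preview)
Your proof is correct and follows essentially the same strategy as the paper: locate the origin in the rotationally invariant plabic tiling $\Sigma(D)$ and apply Lemma~\ref{lemma: PTZ lemma 2.2} to the vertex or face label containing it. The only minor difference is that you dispose of $d\le 2$ at the outset and then exclude the edge case for $d\ge 3$ by a rotational-order argument, whereas the paper rules out the edge case (including $d=2$) via the white/black alternation of the two faces flanking the fixed edge.
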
 
\begin{proof}

Let $D\subset \binom{[n]}{k}$ be a $\rho^\ell$-symmetric maximal weakly separated collection, and let  $\Sigma(D)$ be the associated plabic tiling. From the $\rho^\ell$-symmetry of $D$ we know that the plabic tiling $\Sigma(D)$ is fixed by $2\pi\ell/n$ rotation, and therefore $\rho^\ell\cdot \Sigma(D)= \Sigma(D)$. Consider the center of $\Sigma(D)$, which is the unique fixed point of $\rho^\ell$. There are three cases that can happen:

\begin{enumerate}
    \item If the center is a vertex $v_I$, then $I=I+_n \ell$. By \Cref{lemma: PTZ lemma 2.2}, we have $d \mid k $, and thus $k \equiv 0 \pmod{d}$.
    \item If the center lies on an edge of $\Sigma(D)$, then we must have $d=2$. However, if this edge is fixed by a rotation through an angle of $\pi$, then $\rho^\ell$ must send white cliques to white cliques, and black cliques to black cliques. In order for this to happen, then the two faces on either side of the edge fixed by $\rho^\ell$ must be the same color. But we cannot have this because white and black cliques must alternate. Thus this case cannot occur. 
    \item If the center lies in the interior of a face on the plabic tiling $\Sigma(D)$, then the face $F$ and its label $K$ must be fixed by the $\rho^\ell$ action. Since $F$ corresponds to either a black or a white clique, $K$ is either size $k-1$ or $k+1$. 
    By \Cref{lemma: PTZ lemma 2.2}, $d \mid |K|=k\pm 1$. Since $|K|$ is either $k+1$ or $k-1$, we get 
    that $k \equiv \pm1 \pmod{d}$.
\end{enumerate}

Taking all three cases, we have $k \equiv c\pmod{d}$ for $c \in \{-1,0,1\}$, which completes the proof.
\end{proof}

\subsection{Generating a set when $n=d\ell$}\label{sec: n=dl} We now turn to generating $\rho^\ell$-symmetric maximal weakly separated collections. In \cite[Section 5]{PasqTZ2019}, the authors detail an algorithm for generating $\rho^\ell$-symmetric maximal weakly separated collections when $k= \ell$. Here we give a generalization that covers the $k \neq \ell$ cases. We start by describing the process for obtaining a candidate weakly separated collection when $n=d\ell$. See Subsection~\ref{sub: not_divide} for a modification of the algorithm described below in the case where $n\nmid \ell$. 
\begin{enumerate}
    \item Assume $n=d\ell$ and write $k=dr+c$ for some $0\leq c\leq d-1$. For each $a \in [\ell]$, denote the $\rho^\ell$ orbit of $a$ in $[n]$ by 
    \begin{align*}
        \overline{a} = \{ a+i\ell \mid 0\leq i\leq d-1\}.
    \end{align*}
    \item Fix an arbitrary total order on these orbits: say $\overline{a_1} < \overline{a_2} < \dots <\overline{a_{\ell}}$. Denote the complement of the union of the first $s-1$ orbits of $a_i$ in $[n]$ by
    \begin{align*}
        P_s = [n] \backslash  \bigcup_{i=1}^{s-1} \overline{a_i}.
    \end{align*}
    \item For $P_s =\{ x_1< x_2 <...<x_m\}$, define the successor function $S_{P_s}(x_i) = x_{i+1}$ for $1 \leq i < m$, and $S_{P_s}(x_m) = x_1$. In other words, $S_{P_s} $ is a bijection from $P_s$ to itself that defines a cyclic ordering on $P_s$. 

 \item  For an additional parameter $h\in [d]$ we modify $P_s$ in the following manner: \begin{align*}
P_{s,h}&=P_s\backslash\{a_s+h\ell,\dots,a_s+(d-1)\ell\}
    \end{align*}
    for $1\leq h\leq d-1$. For $h=d$, we set $P_{s, d}= P_s$.  Denote the corresponding interval of consecutive elements of $P_{s, h}$ by
    $$I(i,h)=\{i,S_{P_{s,h}}(i),S^2_{P_{s,h}}(i),\dots,S^{k-1}_{P_{s,h}}(i)\}$$ for $i\in [S_{P_s}(a_s-\ell), a_s]$.

    \item Define $\rho^\ell$-orbit representatives by
   $$
        B_s = \{I(i,h) \mid i \in [S_{P_s}(a_s-\ell), a_s], 1\leq h\leq d,|I(i,h)|=k\} 
    $$
    and denote the corresponding $\rho^\ell$ orbits by 
     \begin{align*}
        L_s = \{I +_n x \ell \mid I \in B_s, x \in [d]\}
    \end{align*}

    \item Finally, we obtain a candidate weakly separated collection from the union of all $\rho^\ell$ orbits: 
    \begin{align*}
        D= \bigcup_{s=1}^{\ell-r+1} L_s
    \end{align*}

\end{enumerate}

We now give an alternative informal process for generating the sets $B_s$ that the reader may find slightly more intuitive. Note that this process allows for elements in the same $\rho^\ell$ orbit appearing in a given $B_s$, making enumeration more challenging. The formal process avoids this redundancy, and we will exclusively rely on it for proving Theorem~\ref{thm: intro_WSCs}. We assert without proof that the two processes yield the same weakly separated collection $D$ for a fixed $k,n,\ell$ and total ordering.

After performing Steps (1) -- (3) exactly as given above, we proceed as follows:
\begin{enumerate}
    
\item For a given $s$, consider an interval subset $I_1$ of $P_s$ of size $k$ ending in $a_s$. If no such interval exists, then $B_s$ is empty. Otherwise, append $I_1$ to $B_s$.

\item  Check if there are elements in $I_j$ to the right of $a_s$ that are also in $\overline{a_s}$. Remove the rightmost such element, if it exists, and append $S_{P_s}(P_s \backslash (I \cup \overline{a_s})$ to the right. Continue until no such elements remain, adding the resulting $I_{j+1}$ to $B_s$ each time an element is replaced. 

  \item If $I_j$ does not contain any elements in the congruence class $\overline{a_s}$ to the right of $a_s$, remove the leftmost element of $I_j$. Each time an element is removed, if the removed element is in $\overline{a_s}$, append the first missing successor to $a_s$ in $S_{P_s}$ that is not in $I_j\cup \overline{a_s}$ to the right. Otherwise, append the first successor to $a_s$ in $S_{P_s}$ not in $I_j$. Add the resulting set $I_{j+1}$ to $B_s$. 
    
\item Repeat Steps (2) and (3) until $a_s$ appears on the left of $I_j$ in Step (3). 

\item For every $i \in [S_{P_s}^{-(\ell-1)}(a_s), a_s]$, repeat Steps (1)-(3), replacing $I_1$ with $$I_j=\{i, S_{P_s}(i), S_{P_s}^2(i), \dots S_{P_s}^{k-1}(i)\}.$$ 
\end{enumerate}

Having obtained the collections $B_s$, we define the set $D$ as the union of $\rho^\ell$ orbits of $B_s$, as above.

\begin{ex}
\label{algo}
Here we consider the example $n=6, k= 3 ,\ell = 3$ and run the algorithm using the informal process. The reader can compare this to the formal process used in the following example. First, choose the ordering $\overline{3}<\overline{2} < \overline{1}$. So, in $B_1$ we start with $\{123\}$. We remove from the left as there are no other elements in $\overline{3}$ besides $3$ itself. We then get $\{234\}$ and $\{345\}$ as our our remaining intervals. There is nothing left to remove, so we proceed. For $B_2$, we select the intervals from $[6]\backslash\{3,6\}$ containing $a_2=2.$ As such, we start with $\{512\}$. Removing from the left we remove $5$. Note that since $5 \in \overline{2}$, we add the set $\{124\}$ to $B_2$, as $4\not\in \overline{2}$. Continuing removing from the left, we add $\{245\}$ to $B_2$ as well. We now have an element to the right of $2$ in $\overline{2}$ to remove, namely $5$. As such, we remove $5$ and add in the next non-equivalence class element $1$, giving us $\{241\}.$ Notice this process allowed for repeat seeds and elements in orbits of other seeds as $\{124\}=\{241\}$ and $\{125\}+_6 3=\{245\}$. With no elements left to remove, we continue to $B_3$. We can only select elements from $[6]\backslash\{2356\}$, which does not have $k=3$ elements. Thus, $B_3$ is empty. Therefore, a $\rho^3$-symmetric maximal weakly separated collection generated by the algorithm is $$ \{123, 234,345, 456, 156, 126,125,245,124,145 \}$$
\end{ex}
\begin{ex}
\label{ex:formal_algo}
Here we also consider the example $n=6, k= 3 ,\ell = 3$ and run the algorithm using the formal process. Choose the ordering $\overline{3}<\overline{2} < \overline{1}$. We have $P_1=[n]$. $B_1$ consists of intervals $I(i, h)$ in $P_1$ with $i\in[S_{P_1}(3-3),3]=[1,3]$. Therefore $B_1=\{123,234,345\}$. Since no interval contains $a_1+ h\ell=3+3=6,$ we see that $I_{i, h}$ will just produce copies of $I_i$. Next we generate $B_2$. Note $P_{2,1}=\{1,2,4\}$ and $P_{2,2}=\{1,2,4,5\}=P_2$. We consider intervals beginning at all $i\in[S_{P_2}(2-3), 2]= [1,2]$ for both values of $h$. This yields $B_2=\{124, 245\}$. Finally we see that $P_3=\{1,4\},$ implying that $B_3$ is empty. Therefore, a $\rho^3$-symmetric maximal weakly separated collection generated by the algorithm is $$ \{123, 234, 345, 456, 156, 126, 125, 245, 124, 145 \}$$ which is identical to the previous example. 
\end{ex}
\subsection{$D$ is maximal} We now show that the collection $D\subseteq {[n] \choose k}$ defined above has $k(n-k)+1$ elements. Our proofs in this subsection largely follow the arguments in \cite[Section 4]{PasqTZ2019}, inserting $\ell$ for $k$ where appropriate.
\begin{lemma}\label{lemma: sizeLsSmalls}
Let $r$ satisfy $k=dr+c$ for $c \in \{-1, 0, 1\}$. Then, if $s \leq \ell - r$, $|L_s|=d|B_s|$. As a consequence, $L_s$ does not contain any elements fixed by $\rho^\ell$ for $s\leq \ell -r$.
\end{lemma}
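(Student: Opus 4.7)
My plan is to reduce the equality $|L_s| = d|B_s|$ to two claims: (a) no $I \in B_s$ is fixed by $\rho^\ell$, so every $\rho^\ell$-orbit in $L_s$ has full size $d$; and (b) distinct elements of $B_s$ lie in distinct $\rho^\ell$-orbits. Together these give a bijection $B_s \times \Z/d\Z \to L_s$ via $(I, x) \mapsto I +_n x\ell$, proving the equality, and the stated consequence about fixed elements of $L_s$ follows immediately from (a) since every element of $L_s$ is a $\rho^\ell$-translate of some $I \in B_s$.

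For claim (a), I would first dispose of the case $c \in \{-1, 1\}$ using \Cref{lemma: PTZ lemma 2.2}, which forbids a fixed $k$-subset when $d \nmid k$. For $c = 0$, I would split by $h$. When $h = d$, the set $I(i, d)$ is a contiguous block of length $k = dr$ in the cyclic order on $P_s$, and $\rho^\ell$ restricts to $P_s$ as a cyclic shift by $\ell - s + 1$ positions (since each $+\ell$ step in $[n]$ advances $\ell - s + 1$ steps in $P_s$); because $s \leq \ell - r$ gives $k < |P_s| = d(\ell - s + 1)$ and the shift is nontrivial modulo $|P_s|$, no proper contiguous block can be fixed. When $h < d$, any $\rho^\ell$-invariant subset of $P_{s,h}$ would be a union of $\rho^\ell$-orbits, but $P_{s,h}$ meets $\overline{a_s}$ in only $h < d$ elements, so such an $I$ must avoid $\overline{a_s}$ altogether. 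This forces $I$ into the arc $(a_s - \ell, a_s) \cap P_s$ of length $\ell - s < \ell$, which contains at most one representative per $\rho^\ell$-orbit, ruling out $I$ being a union of orbits.

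For claim (b), I will introduce the invariant $t := |I \cap \overline{a_s}|$. A position count in $P_{s, h}$---using that $\overline{a_s} \cap P_{s, h}$ sits at positions $0, \ell - s + 1, 2(\ell - s + 1), \dots$ after $a_s$---shows that $I(i, h) \cap \overline{a_s}$ is always an initial arc $\{a_s, a_s + \ell, \ldots, a_s + (t - 1)\ell\}$ of $\overline{a_s} \cap P_{s, h}$. If $I = I' +_n x\ell$ with $x \in [d]$, then for $0 < t < d$ the required equality of length-$t$ initial arcs in $\Z/d\Z$ forces $x \equiv 0 \pmod d$; for $t = d$ one must have $h = h' = d$, reducing to the cyclic-shift analysis on $P_s$ from part (a); and for $t = 0$, both $I$ and $I'$ lie inside the arc $(a_s - \ell, a_s) \cap P_s$, while $I +_n x\ell$ lies in a disjoint arc for $x \not\equiv 0 \pmod d$, again forcing $x \equiv 0 \pmod d$ and hence $I = I'$.

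The main technical hurdle will be the position-counting argument establishing the initial-arc description of $I(i, h) \cap \overline{a_s}$, together with the careful handling of the degenerate boundary cases $t = 0$ and $t = d$, where the $\Z/d\Z$ arithmetic argument no longer pins down $x$. Both amount to tracking the $P_{s, h}$-successor through the arcs of length $\ell - s$ separating the surviving elements of $\overline{a_s}$, and verifying that the valid starting range $[S_{P_s}(a_s - \ell), a_s]$ is strictly narrower than the minimum nontrivial cyclic shift on $P_s$.
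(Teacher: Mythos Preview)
Your decomposition into (a) full orbit size and (b) distinct orbits is correct, and both you and the paper hinge on the invariant $J = I \cap \overline{a_s}$. The paper's argument is much terser: it claims the $d$ translates $J +_n x\ell$ are pairwise distinct unless $J = \overline{a_s}$, then argues this last case forces $I = P_s$, whence $k = |P_s| = d(\ell - s + 1)$ and so $r = \ell - s + 1$, contradicting $s \leq \ell - r$. Your initial-arc description of $J$ is exactly what is needed to justify the paper's ``distinct translates'' claim (which fails, for instance, when $J=\emptyset$), and your explicit handling of (b) supplies what the paper leaves under the phrase ``by construction'' (where $B_s$ is simply \emph{called} a set of orbit representatives in the algorithm). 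So you are filling in details the paper elides, at the cost of a longer case analysis on $t = |J|$.

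One point to tighten: claim (a) as phrased --- ``no $I \in B_s$ is fixed by $\rho^\ell$'' --- does not by itself yield orbit size $d$ when $d$ is composite, since an $I$ fixed by $\rho^{x\ell}$ for some $1 < x < d$ has orbit size $d/\gcd(d,x) < d$ without being fixed by $\rho^\ell$. You must rule out fixedness under every nontrivial power. Your arguments already do this: for $c = \pm 1$, the orbit-count behind \Cref{lemma: PTZ lemma 2.2} applied with $x\ell$ in place of $\ell$ gives $(d/\gcd(d,x)) \mid k$, forcing $d \mid x$; and your $c = 0$ analyses (a proper contiguous block in $P_s$ is fixed by no nontrivial rotation, and containment in a single length-$(\ell - s)$ arc meets each $\rho^{x\ell}$-orbit at most once) exclude all nontrivial shifts simultaneously. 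So this is a wording fix rather than a real gap.
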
 

\begin{proof}
 Let $I \in B_s$, and define $J=I \cap \overline{a_s}$. Note that the sets $J+_n \ell x$  for $1 \leq x \leq d$ are not all distinct only when $J=\overline{a_s}$. By construction, it follows that 
if $I \neq P_s$, then the sets $I+_n \ell x$ form $d$ distinct intervals within $P_s$, implying that in this case$|L_s|=d|B_s|$.

Since $P_s$ is fixed under addition of $\ell$ modulo $n$, we have that if $I=P_s$, then $I$ is also fixed by addition modulo $n$. 
For $|L_s| \neq d |B_s|$,  we must therefore have $|I|=|P_s|=k$ for some $I$. Assume towards a contradiction that this occurs. By construction, $|P_s|=n-d(s-1)=d(\ell-s+1)$. Since $k$ is then a multiple of $d$, we have $c=0$ and $r=\ell-s+1$. When $s \leq \ell-r$, the equality $r=\ell - s + 1$ implies that $s \leq s-1$, a contradiction. Thus, $|L_s|=d|B_s|$. 
\end{proof}

\begin{lemma}\label{lemma: SizeBsSmalls}
 For all $s < \ell-r$, $|B_s|=k$. Furthermore, if $k=dr-1$ or $k=dr$, $|B_{\ell-r}|=k$.
 \end{lemma}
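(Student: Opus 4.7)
The approach is to enumerate the pairs $(i,h) \in [S_{P_s}(a_s-\ell), a_s] \times [d]$ indexing the candidate intervals $I(i,h)$, identify when two such pairs give the same $k$-subset of $P_s$, and show that the resulting count equals $k$. The first ingredient is basic bookkeeping: the window $[S_{P_s}(a_s-\ell), a_s]$ contains exactly $\ell - s + 1$ elements of $P_s$, because any $\ell$ cyclically consecutive integers in $[n]$ meet each orbit $\overline{a_j}$ exactly once, and $s-1$ of these orbits have been removed in passing from $[n]$ to $P_s$. In addition, $|P_{s,h}| = d(\ell - s) + h$, and I would first verify that for $s < \ell - r$ (so $\ell - s \geq r+1$) and $k \leq dr + 1$ we always have $|P_{s,h}| > k$ for every $h \in [d]$; in particular, the clause $|I(i,h)| = k$ in the definition of $B_s$ never excludes a pair.

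Next I would analyze when two parameter pairs yield the same subset. Fixing $i$ and decreasing $h$ from $d$ down to $1$, each step removes one further element of $\overline{a_s}$ from the underlying set. The interval $I(i,h)$ either stays fixed or its right endpoint jumps forward by one position in $P_s$, depending on whether the newly removed element $a_s + (h-1)\ell$ lies within the cyclic arc from $i$ to its $(k-1)$-st successor in $P_{s,h}$. This produces, for each fixed $i$, a monotone chain of subsets indexed by the elements of $\overline{a_s}\setminus\{a_s\}$ that are encountered by the interval starting at $i$. For two different starting points $i \ne i'$ in the window, the intervals $I(i,h)$ and $I(i',h')$ coincide only when their right endpoints agree in $P_s$; I would rule out other coincidences using the fact that distinct elements of the window are separated by at most $\ell - 1$ steps in $P_s$, which is less than $k$ once the hypotheses on $s$ and $k$ are imposed.

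The key obstacle is the global count: showing that after identifying all coincidences, the total number of distinct subsets is exactly $k$. My plan is to set up a bijection between $B_s$ and the $k$-element arc of possible right endpoints of these intervals in the cyclic order of $P_s$, namely the arc of length $k$ starting immediately after $a_s - \ell$. The hypothesis $s < \ell - r$ guarantees that $|P_s|$ is large enough that this arc neither collapses nor wraps around, and each potential right endpoint is attained by a unique $I(i,h) \in B_s$, giving $|B_s| = k$. For the furthermore case $s = \ell - r$ with $k \in \{dr - 1, dr\}$, the same bijection applies, but one must check that the borderline pair where $|P_{s,h}|$ is only $k$ or $k+1$ still yields an interval of exactly $k$ elements; this follows from the explicit formula $|P_{\ell - r, h}| = dr + h$ and the congruence $k \equiv c \pmod d$ with $c \in \{-1, 0\}$.
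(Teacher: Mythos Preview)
Your plan diverges from the paper's at the main counting step, and the proposed bijection with right endpoints does not work. Take $n = 12$, $k = 5$, $\ell = 3$, so $d = 4$, $r = 1$, and at $s = 1$ we have $t = \ell - s + 1 = 3$. With $P_1 = [12]$ and $a_1 = 3$ (so $\overline{a_1} = \{3,6,9,12\}$), the sets
\[
I(2,1) = \{2,3,4,5,7\} \quad\text{and}\quad I(3,2) = \{3,4,5,6,7\}
\]
are distinct elements of $B_1$, yet both have right endpoint $7$ in $P_1$. Hence the map sending an interval to its right endpoint in $P_s$ is not injective on $B_s$, and there is no $k$-element arc of right endpoints in bijection with $B_s$. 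Your remark about ruling out coincidences for $i \ne i'$ is also off target: two intervals $I(i,h)$ and $I(i',h')$ with $i \ne i'$ are automatically distinct as subsets, because $i$ is recoverable as the left endpoint; the genuine obstruction is that distinct subsets can share a right endpoint, which is exactly what happens above.

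The paper avoids this by grouping according to the \emph{left} endpoint. After normalizing so that $P_s = [dt]$ with $a_s \mapsto t$, one counts, for each fixed $i \in [t]$, the number of distinct $I(i,h)$ as $h$ ranges over $[d]$; this number is shown to be $\gamma_i = \lfloor (k+i-1)/t \rfloor$, essentially the number of multiples of $t$ that a length-$k$ interval starting at $i$ can contain. Since intervals with different $i$ are automatically distinct, $|B_s| = \sum_{i=1}^t \gamma_i$, and the floor sum is evaluated directly to $k$ by writing $k = at + b$. Your preliminary bookkeeping (the window has $\ell - s + 1$ elements; $|P_{s,h}| = d(\ell - s) + h > k$ in the stated range of $s$) is correct and matches the paper, but the right-endpoint bijection must be replaced by this per-$i$ count.
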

\begin{proof}
Let $t=\ell-s+1$. We wish to enumerate distinct intervals $I(i,h) \subseteq P_{s,h}$ appearing in $B_s$. Since $|P_s|=dt,$ there is a bijection $f:P_s\to [dt]$ uniquely defined by $f(a_s)=t$ and $f(S_{P_s}(b))=S_{[dt]}(f(b))$ where the cyclic ordering on $[dt]$ is given by $S_{[dt]}(x)=x+_n 1$. We therefore assume without loss of generality that $P_s=[dt]$. To avoid double counting any particular $I(i, h)$, we introduce the notation $h^*$ to represent, for a fixed $i$, the smallest value of $h$ such that $I(i,h) = I(i, h^*)$. By construction, we see that $th^* \in I(i,h^*)$. Then $th^*-i+1$ is the size of the interval $[i, th^*]$, so it is the minimum size of $I(i, h)$ for a fixed $i$. Since $I(i,h)$ must have size $k$, we obtain the inequality $k \geq th^-i+1$. Rewriting the inequality, we define  

\begin{align*}
    \gamma_i= \left \lfloor \frac{k+i-1}{t} \right \rfloor  \geq h
\end{align*}

as the number of valid and distinct intervals $I(i, h^*)$ for our fixed $i$. This implies that when $|P_{s,h}|> k$, every $i \in [t]$ corresponds to $\gamma_i$ distinct values of $h$ that yield distinct intervals of size $k$. Therefore, for all $s$  such that $|P_{s,h}|>k$, we have that summing over $i \in [t]$ yields the total number of distinct intervals, which is also equivalent to the cardinality of $B_s$:

\begin{align*}
    |B_s| &= \sum_{i=1}^t \left \lfloor \frac{k+i-1}{t} \right \rfloor.  
\end{align*}

Let $k=at+b$, for integers $a \geq 0$, and $0 \leq b \leq t$. We then have

\begin{align*}
     \sum_{i=1}^t \left \lfloor\frac{k+i-1}{t} \right \rfloor 
   & = \left \lfloor  \sum_{i=1}^t \frac{at + b + i-1}{t} \right \rfloor \\
   &= at +  \left \lfloor  \sum_{i=1}^t \frac{b+i-1}{t} \right \rfloor \\
   &= at +\sum_{i=1}^t
    \begin{cases}
        0 & i < t-b\\
        1 & i \geq t-b
    \end{cases} \\
    &= at +b.
\end{align*}

Therefore, in order to describe when $|B_s|=k$, it suffices to determine when $|P_{s,h}|>k$. Recall that $|P_{s,h}| = d\ell-ds+h$, so that for any $s<\ell-r$, we have $|P_{s,h}|>k$. Additionally, if we assume that $|P_{s,h}|=k=dr-1$, then $h=d-1$ and $s=\ell-r+1$. Similarly, for $|P_{s,h}|=k=dr$, we have $h=d$, and $s = \ell-r+1$.
Thus, for all $s < \ell-r$, $|B_s| = k$. Moreover, for both $k=dr-1 $ and $k=dr$, we have $|B_{\ell-r}| = k$ as well. 
\end{proof}

\begin{corollary}\label{cor: sizeBl-r}
If $k=dr+1$, then $|B_{\ell-r}|=k-r$. 
\end{corollary}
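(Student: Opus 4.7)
The plan is to adapt the counting argument from the proof of \Cref{lemma: SizeBsSmalls}, accounting for the fact that the hypothesis $|P_{s,h}| > k$ fails at exactly one value of $h$ when $k = dr+1$ and $s = \ell - r$.

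First I would set $s = \ell - r$, so that $t = r+1$. Under the order-preserving bijection $P_s \cong [dt]$ from the proof of \Cref{lemma: SizeBsSmalls} sending $a_s \mapsto t$, the subset $P_{s,h}$ corresponds to $[dt] \setminus \{(h+1)t, (h+2)t, \ldots, dt\}$, so $|P_{s,h}| = dr + h$. In particular, $|P_{s,1}| = k$ while $|P_{s,h}| > k$ for all $h \geq 2$. Rerunning the $h^*$ argument from that proof with a fixed starting position $i \in [1,t]$ shows that the number $\gamma_i$ of distinct elements of $\{I(i,h) : h \in [d]\}$ is still $\lfloor (k+i-1)/t \rfloor$ (the bound $\gamma_i \leq d$ is automatic in this range), and the summation identity $\sum_{i=1}^{t}\lfloor (k+i-1)/t \rfloor = k$ (valid for any positive integers $k, t$) yields $\sum_{i=1}^{t} \gamma_i = k$.

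The key new issue is that distinct starting positions $i \neq i'$ can now produce the same interval, so $|B_s|$ is strictly smaller than $\sum_i \gamma_i$. For a fixed $h \geq 2$, different choices of $i$ give distinct cyclic intervals in $P_{s,h}$ because $|P_{s,h}| > k$. Therefore any collision $I(i,h) = I(i',h')$ with $i \neq i'$ must correspond to a set that equals the entire ambient set at some stage, which forces that set to be $P_{s,1}$. On the other hand $I(i,1) = P_{s,1}$ for every $i \in [1,t]$ (since $[1,t] \subseteq P_{s,1}$ and $|P_{s,1}| = k$), so $P_{s,1}$ alone contributes $t$ to $\sum_i \gamma_i$ but only $1$ to $|B_s|$.

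The hard part will be rigorously ruling out coincidences $I(i,h) = I(i',h')$ with $i \neq i'$ and $I \neq P_{s,1}$. My approach will be to examine the complements $P_{s,h} \setminus I$ and $P_{s,h'} \setminus I$, which must be contiguous blocks of sizes $h-1$ and $h'-1$ in their respective cyclic orderings. Because $I \subset P_{s,h} \subset P_{s,h'}$ when $h < h'$, the new elements $\{(h+1)t, \ldots, h't\}$ lie in the complement in $P_{s,h'}$; their spacing of $t-1$ inside $P_{s,h'}$ forces a very rigid configuration, and a short case analysis on $h'-h$ should reveal that the only surviving possibility is $I = P_{s,1}$ itself. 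Granting this verification, the corollary follows from $|B_{\ell - r}| = \sum_i \gamma_i - (t - 1) = k - r$.
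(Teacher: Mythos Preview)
Your approach matches the paper's: both set $t=r+1$, note that $|P_{s,1}|=k$ while $|P_{s,h}|>k$ for $h\geq 2$, and arrive at $|B_{\ell-r}|=1+\sum_i(\gamma_i-1)=1+k-(r+1)=k-r$. The paper simply asserts the $h\geq 2$ count as $\sum_i(\gamma_i-1)$ by appeal to the argument of \Cref{lemma: SizeBsSmalls} (which applies verbatim wherever $|P_{s,h}|>k$) and then adds $1$ for the unique set $P_{s,1}$ produced at $h=1$.

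Your proposed complement case analysis for the ``hard part'' is heavier than necessary, and the sentence beginning ``Therefore any collision\dots'' is a non sequitur as written: distinctness for fixed $h\geq 2$ does not by itself force cross-$h$ collisions down to $P_{s,1}$. The shortcut implicit in the $h^*$ discussion of \Cref{lemma: SizeBsSmalls} is that $h^*$ is \emph{intrinsic} to the set $I$: since $I(i,h^*)$ starts in $[1,t]$ and contains $h^*t$, it contains all of $t,2t,\dots,h^*t$ and no larger multiple of $t$, so $h^*=\max\{m:mt\in I\}$. Once $h^*\geq 2$ we have $|P_{s,h^*}|>k$, so $I$ is a \emph{proper} cyclic interval in $P_{s,h^*}$ and its starting point $i$ is uniquely recovered as well. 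This immediately rules out all collisions away from $P_{s,1}$, with no analysis of complements required.
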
 
\begin{proof}
Let $k= dr+1$. Note that by construction, $P_{s,h}$ has $d\ell-ds+h$ elements. Therefore, we have $|P_{\ell-r,h}| > k$ when $h \geq 2$ and the number of these elements is $\sum \limits_{i=1}^{\ell-s+1} (\gamma_i -1)$. The case for $h=1$ then yields one additional set. Thus, when $s=\ell-r$, we have $t=r+1$ and the number of elements in $B_{\ell-r}$ is given by
\begin{align*}
    |B_{\ell -r}| &= 1+\sum_{i=1}^{r+1} (\gamma_i - 1) \\
    &= 1 + k -(r+1)\\
    &= k-r.
\end{align*}
\end{proof}

\begin{ex}
Again consider our running example of $n=6$, $k=3$, $\ell=3$. Note that in this case, $d=2$, $r=1$, and $k=dr+1$. Choose $s=1 < 3-1=2$, then $t=3-1+1=3$. Then $|B_1|=\sum_{i=1}^t \gamma_i = \sum_{i=1}^3 \left\lfloor \frac{k+i-1}{t} \right\rfloor = 1+1+1 = 3 =k$. 
For $s=2$, $|B_2|=\sum_{i=1}^2 \gamma_i=1+1=2=k-r$, matching the count from Example~\ref{algo}. 
\end{ex}

\begin{ex}
    Now let $n=6$, $k=2$, and $\ell=3$. In this case we have $d=2$, $r=1$, and therefore $k=dr$. We have $t=1$, so $|B_3|= \gamma_1 = \lfloor\frac{2+1-1}{1}\rfloor = 2 = k$. 
\end{ex}

\begin{lemma}
\label{Lemma:ModImps}

 The number of elements in $L_{\ell-r+1}$ is given by 
 \begin{equation*}
     |L_{\ell-r+1}|=\begin{cases}
         k+1 & c=-1\\
         1 & c=0\\
         0 & c=1.
     \end{cases}
 \end{equation*}
 \end{lemma}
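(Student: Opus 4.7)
The plan is to analyze $L_{\ell-r+1}$ case by case, all three cases hinging on the single key computation $|P_{\ell-r+1}| = dr$. This follows immediately from $|P_s| = n - d(s-1) = d\ell - d(\ell-r) = dr$, and comparing this to $k = dr + c$ governs everything.

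For $c = 1$, I would observe that $k = dr + 1 > dr = |P_{\ell-r+1}|$, while every $I(i, h)$ is a subset of $P_{s,h} \subseteq P_s$. Thus no subset of size $k$ fits in $P_{\ell-r+1}$, so $B_{\ell-r+1}$ is empty, and therefore $|L_{\ell-r+1}| = 0$. For $c = 0$, $k = dr = |P_{\ell-r+1}|$, so the size-$k$ condition forces $|P_{s,h}| \geq k$, which in turn forces $h = d$ and $I(i, d) = P_s$ for every admissible $i$. Hence $B_{\ell-r+1} = \{P_s\}$. Since $P_s$ is by construction a union of $\rho^\ell$-orbits, $P_s +_n x\ell = P_s$, giving $L_{\ell-r+1} = \{P_s\}$ and $|L_{\ell-r+1}| = 1$.

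The main obstacle is case $c = -1$, where $k = dr - 1 = |P_s| - 1$ and I need to show $|L_{\ell-r+1}| = k+1 = dr$. The conceptual step is to observe that a size-$k$ subinterval of $P_s$ (under the cyclic order on $P_s$) is determined by its unique missing element, so there are exactly $dr$ such intervals, and two of them lie in the same $\rho^\ell$-orbit if and only if their missing elements do. Since $P_s$ decomposes into the $r$ orbits $\overline{a_s}, \overline{a_{s+1}}, \ldots, \overline{a_\ell}$, each of size $d$ by Lemma~\ref{lemma: PTZ lemma 2.2}, there are exactly $r$ orbits of size-$k$ intervals, each of size $d$. It remains to verify that $B_{\ell-r+1}$ meets every such orbit, so that $L_{\ell-r+1}$ recovers all $dr = k+1$ intervals.

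To verify this, I would argue as follows. Taking $h = d-1$ yields $P_{s,d-1} = P_s \setminus \{a_s + (d-1)\ell\}$, giving an interval whose missing element lies in $\overline{a_s}$. Taking $h = d$ gives $P_{s,d} = P_s$, and letting $i$ range over the block $[S_{P_s}(a_s-\ell), a_s] \cap P_s$ (which contains one representative from each of the $r$ orbits $\overline{a_s}, \ldots, \overline{a_\ell}$), the interval $I(i, d)$ has as its missing element the predecessor of $i$ in $P_s$, which ranges over a full set of orbit representatives as $i$ varies. Thus every orbit is represented in $B_{\ell-r+1}$, each contributes $d$ distinct elements to $L_{\ell-r+1}$, and $|L_{\ell-r+1}| = rd = k+1$, completing the proof.
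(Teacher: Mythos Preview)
Your argument is correct and, for $c\in\{0,1\}$, matches the paper's. For $c=-1$ you take a more careful route that actually repairs a slip in the paper. The paper asserts that $B_{\ell-r+1}$ already contains all $k+1$ of the size-$k$ subsets of $P_{\ell-r+1}$ and hence equals $L_{\ell-r+1}$ by $\rho^\ell$-invariance of that full collection. But the starting index $i$ in the definition of $B_s$ ranges only over the $r$ elements of $[S_{P_s}(a_s-\ell),a_s]\cap P_s$, and the size constraint $|P_{s,h}|\geq k$ forces $h\in\{d-1,d\}$, so in fact $|B_{\ell-r+1}|=r$, not $k+1$ (for instance, with $n=6$, $\ell=2$, $k=2$ and ordering $\overline{2}<\overline{1}$ one finds $B_2=\{\{1,3\}\}$ while $L_2$ has three elements). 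Your orbit decomposition---matching each size-$k$ interval to its unique missing element, partitioning the $dr$ intervals into $r$ orbits of size $d$, and verifying that $B_{\ell-r+1}$ meets each orbit---is exactly what is needed to conclude $|L_{\ell-r+1}|=dr=k+1$. The one place worth a line of justification is the claim that the predecessors $S_{P_s}^{-1}(i)$ form a full set of orbit representatives: since the $r$ starting points $i_1<\cdots<i_r$ are consecutive in $P_s$ with $i_r=a_s\in\overline{a_s}$, their predecessors are $a_s+(d-1)\ell,\,i_1,\ldots,i_{r-1}$, which are visibly one per orbit.
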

\begin{proof}
Recall that $B_{\ell-r+1}$ corresponds to the final value of $s$ for which $P_s$ could possibly contain $k$ elements. We consider the three cases of $k=dr+c$ for $c\in \{-1, 0, 1\}$ separately:
\begin{enumerate}
    \item If $k=dr-1$, then the set $P_{\ell-r+1}$ has size $dr=k+1$. Therefore, since removing any element of $P_{\ell-r+1}$ yields a size $k$ subset of consecutive elements, the elements $I(i, h)$ of $B_{\ell-r+1}$ are therefore all $k+1$ of the $k$-element subsets of $P_{\ell-r+1}$. Moreover, since $P_{\ell-r+1}=P_{\ell-r+1}+_n \ell$ by construction, it follows that the set of $k$-element subsets of $P_{\ell-r+1}$ is fixed by addition modulo $n$ as well. Thus, $|B_{\ell-r+1}|=|L_{\ell-r+1}|=k+1$.
    \item If $k=dr$, then $|P _s|=dr=k$, so there is exactly one $k$-element subset, and it is fixed by addition of $\ell$ modulo $n$. Thus, $|B_{\ell-r+1}|=|L_{\ell-r+1}|=1$.
    \item If $k=dr+1$, then $|P_s|=k-1$ and cannot contain any $k$-element subsets. Thus $B_{\ell-r+1}=L_{\ell-r+1}=\emptyset$.
\end{enumerate}

\end{proof}

\begin{proposition}\label{prop: cardinality_n=dl}  The number of elements in $D$ is $k(d\ell-k)+1$. 
\end{proposition}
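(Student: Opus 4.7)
The plan is to compute $|D|$ by exploiting the decomposition $D=\bigsqcup_{s=1}^{\ell-r+1} L_s$, verify that this union is indeed disjoint, and then apply the preceding lemmas case by case according to the value of $c\in\{-1,0,1\}$ in $k=dr+c$.

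First I would establish disjointness of the family $\{L_s\}$. Any $J\in L_s$ is a $\rho^\ell$-translate $I+_n \ell x$ of some $I\in B_s\subseteq P_{s,h}\subseteq P_s=[n]\setminus\bigcup_{i<s}\overline{a_i}$, so $J$ contains no element of any earlier orbit $\overline{a_i}$ for $i<s$ (because each $\overline{a_i}$ is $\rho^\ell$-invariant). On the other hand, since each interval $I(i,h)\in B_s$ starts at some $i\in[S_{P_s}(a_s-\ell),a_s]$ and $a_s\in P_{s,h}$, the interval sweeps past $a_s$, so $I$ contains at least one element of $\overline{a_s}$; the same is therefore true of every $\rho^\ell$-translate. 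Hence each element of $L_s$ is detected by the smallest $i$ for which it meets $\overline{a_i}$, proving disjointness.

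Next I would sum $|L_s|$ over $s$, splitting into the three congruence classes. Using \Cref{lemma: sizeLsSmalls} ($|L_s|=d|B_s|$ for $s\le\ell-r$), \Cref{lemma: SizeBsSmalls} ($|B_s|=k$ for $s<\ell-r$ in all cases, and $|B_{\ell-r}|=k$ when $c\in\{-1,0\}$), \Cref{cor: sizeBl-r} ($|B_{\ell-r}|=k-r$ when $c=1$), and \Cref{Lemma:ModImps} (for $|L_{\ell-r+1}|$), I get
\begin{equation*}
|D|=
\begin{cases}
dk(\ell-r)+(k+1) & c=-1,\\
dk(\ell-r)+1 & c=0,\\
dk(\ell-r-1)+d(k-r)+0 & c=1.
\end{cases}
\end{equation*}

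Finally I would verify that each expression equals $k(d\ell-k)+1$. Substituting $k=dr+c$ into $k(d\ell-k)+1=kd\ell-k^2+1=kd(\ell-r)-kc+1$, the three cases give $kd(\ell-r)+k+1$, $kd(\ell-r)+1$, and $kd(\ell-r)-k+1$ respectively. The first two agree with the computation above by inspection. For $c=1$, the third case simplifies via $dk(\ell-r-1)+d(k-r)=dk(\ell-r)-dr=dk(\ell-r)-(k-1)$, again matching. The main obstacle I expect is the disjointness argument, since the lemmas in the paper only control the size of each $L_s$ individually and not their interactions; carefully pinning down that $L_s$ elements are characterised by the property ``smallest $\overline{a_i}$ they meet is $\overline{a_s}$'' is the key combinatorial point, after which everything reduces to bookkeeping.
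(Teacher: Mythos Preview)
Your core argument---invoking \Cref{lemma: sizeLsSmalls}, \Cref{lemma: SizeBsSmalls}, \Cref{cor: sizeBl-r}, and \Cref{Lemma:ModImps} to compute each $|L_s|$ and then simplifying case by case---is exactly the paper's proof, and your algebra matches theirs line for line.

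You are right that summing the $|L_s|$ to obtain $|D|$ requires the $L_s$ to be pairwise disjoint; the paper's proof of this proposition uses that tacitly (disjointness is only asserted later, inside \Cref{Prop:genConstruct}, again without justification). However, the disjointness argument you give has a gap. The step ``the interval sweeps past $a_s$'' needs the length-$k$ interval in $P_{s,h}$ starting at $i$ to reach $a_s$, and the distance in $P_{s,h}$ from the earliest allowed starting point $i=S_{P_s}(a_s-\ell)$ to $a_s$ is $t=\ell-s+1$. When $k<t$ (for instance $s=1$ and $k<\ell$) this fails: with $n=12$, $k=3$, $\ell=4$, $a_1=4$ one finds $I(1,1)=\{1,2,3\}\in B_1$, which meets $\overline{a_1}=\{4,8,12\}$ nowhere and in fact also lies in $B_2$ (taking $a_2=3$), so $L_1\cap L_2\neq\varnothing$. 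The union $D$ still has the claimed cardinality in such examples, but not via the disjoint sum you set up; a more careful treatment of the overlaps (or a sharper description of which $I(i,h)$ actually contribute new orbits) is needed to make the count rigorous.
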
 
\begin{proof}
We apply a combination of Lemma~\ref{lemma: sizeLsSmalls}, Lemma~\ref{lemma: SizeBsSmalls}, Corollary~\ref{cor: sizeBl-r}, and Lemma~\ref{Lemma:ModImps} to determine the size of $D=\bigcup_{s=1}^{\ell-r+1}L_s$ in the following cases:
\begin{enumerate}
    \item If $k \equiv -1 \pmod d$, we have: 
\begin{align*}
    |D|&= kd(\ell-r)+k+1 \\
    &= kd\ell-kdr+k+1 \\
    &= kd\ell-k(k-1)+k+1 \\
    &= kd\ell-k^2+1 \\
    &= k(d\ell-k)+1
\end{align*}
\item If $k \equiv 0 \pmod d$: 
\begin{align*}
    |D|&= kd(\ell-r)+1 \\
    &= kd\ell-kdr+1 \\
    &= kd\ell-k^2+1 \\
    &= k(d\ell-k)+1
\end{align*}
\item Lastly, if $k \equiv 1 \pmod d$: 
\begin{align*}
 |D|   &= kd(\ell-r-1)+d(k-r) \\
    &= kd \ell - kdr - kd+kd-dr \\
    &= kd\ell - (k+1)(k-1) \\
    &= kd\ell-k^2+1 \\
    &= k(d\ell-k)+1
\end{align*}
\end{enumerate}

\end{proof}

\subsection{$D$ is weakly separated}

In this subsection, we prove that the collection $D\subseteq {[n] \choose k}$ generated by the algorithm given in Subsection~\ref{sec: algorithm} is weakly separated. As in the previous subsection, we follow a similar strategy to \cite{PasqTZ2019}, inserting $\ell$ for $k$ where appropriate. Where the statements are independent of $\ell$, we provide a sketch of their proofs for clarity.

\begin{lemma}[Lemma 3.3, \cite{PasqTZ2019}]\label{lemma: bordinI}
 Suppose we have a set $P$ with  cyclic ordering $<_P$ and an interval $I$ of consecutive elements of $P$. If $a<_Pb<_Pc<_Pd$ and $a,c\in I,$ then $b\in I$ or $d\in I$. 
 \end{lemma}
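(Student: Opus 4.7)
The plan is to argue by contradiction. Suppose both $b \notin I$ and $d \notin I$. Since $I$ is an interval of consecutive elements of $P$ with respect to the cyclic order induced by $S_P$, I may write $I = \{S_P^j(i_0) : 0 \leq j < |I|\}$ for some starting element $i_0 \in I$. Its complement $P \setminus I$ is then precisely $\{S_P^j(i_0) : |I| \leq j < |P|\}$, which is itself a cyclic interval.

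From this description, as one traverses $P$ once by iterating $S_P$, the characteristic function $\mathbf{1}_I$ makes exactly two transitions: one from $0$ to $1$ upon entering $I$ at $i_0$, and one from $1$ to $0$ upon leaving $I$ at $S_P^{|I|}(i_0)$. Equivalently, $P$ splits into exactly one maximal run of elements of $I$ followed by exactly one maximal run of elements of $P \setminus I$ (when $I$ is a proper nonempty subset; the cases $I = \emptyset$ or $I = P$ make the statement of the lemma vacuous).

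Now the hypothesis $a <_P b <_P c <_P d$ combined with the assumption $a, c \in I$ and $b, d \notin I$ produces the alternating pattern $\text{in}, \text{out}, \text{in}, \text{out}$ along the cyclic order. Each of the four consecutive arcs from $a$ to $b$, from $b$ to $c$, from $c$ to $d$, and from $d$ to $a$ along $S_P$ begins and ends on opposite sides of $I$, and so each must contain at least one transition of $\mathbf{1}_I$. This gives at least four transitions around $P$, contradicting the previous paragraph. Therefore $b \in I$ or $d \in I$.

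I do not anticipate a real obstacle: the only subtlety is pinning down the "exactly two transitions" characterization of an interval from the successor-function definition used in the paper, but this follows immediately from the explicit description of $I$ and its complement above.
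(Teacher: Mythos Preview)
Your proof is correct and uses essentially the same idea as the paper: both exploit that a cyclic interval and its complement are each connected arcs, so the pattern $a\in I$, $b\notin I$, $c\in I$, $d\notin I$ is impossible. The paper phrases this as a two-case split on which of the two arcs between $a$ and $c$ lies inside $I$, whereas you count transitions of the indicator function; these are equivalent reformulations of the same observation.
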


\begin{proof}
    Denote by $<_I$ the cyclic order on $I$ induced by $<_P$. If $a<_I i <_I c$ for some $i\in I$, we necessarily have $b\in I$, because $I$ is an interval. Similarly, if we have $c<_I i <_I a$ for some $i \in I$, we traverse the other direction in $P$ and necessarily contain $d$, as $c<_Id<_Ia$ from the cyclic order on $P$. Therefore we must have $b\in I$ or $d\in I$, as desired.
\end{proof}

\begin{corollary}[Lemma 4.8, \cite{PasqTZ2019}]\label{cor:ac-bord}
Let $I\in L_s$. If $a,c\in I$ and $b,d\notin I$ satisfy $a<b<c<d$ then $b\in \overline{a_s}$ or $d\in\overline{a_s}$. \end{corollary}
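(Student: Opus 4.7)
The plan is to leverage the interval structure of $I$ and apply \Cref{lemma: bordinI}. Any $I \in L_s$ has the form $I = I(i, h) +_n x\ell$ by construction, so $I$ is an interval of consecutive elements in the cyclic set $P' := P_{s,h} +_n x\ell$. A direct calculation shows $P' = P_s \setminus E$ for some subset $E \subseteq \overline{a_s}$; explicitly, $E$ is the image of $\{a_s + h\ell, \dots, a_s + (d-1)\ell\}$ under the shift by $x\ell$ modulo $n$. Note that $P_s$ itself is $\rho^\ell$-invariant, so the cyclic order $<_{P'}$ on $P'$ is just the restriction of the natural cyclic order on $[n]$.

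Given the hypothesis $a, c \in I$, $b, d \notin I$, and $a < b < c < d$, suppose for contradiction that both $b \notin \overline{a_s}$ and $d \notin \overline{a_s}$. Since $E \subseteq \overline{a_s}$, this already implies $b, d \notin E$. If in addition $b, d \in P_s$, then $b, d \in P_s \setminus E = P'$. Because the cyclic order on $P'$ is induced from the natural order on $[n]$, the strict inequalities $a < b < c < d$ transfer to $a <_{P'} b <_{P'} c <_{P'} d$. Applying \Cref{lemma: bordinI} to the interval $I \subseteq P'$ then yields $b \in I$ or $d \in I$, contradicting $b, d \notin I$.

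Consequently, at least one of $b, d$ must lie outside $P_s$, i.e., in $\bigcup_{i < s} \overline{a_i}$. Ruling out this remaining configuration is the main obstacle in the proof: one must show that the interval structure of $I$ inside $P_s \setminus E$, combined with the $\rho^\ell$-periodic (equally spaced) distribution of each orbit $\overline{a_i}$ around $[n]$, is incompatible with having one of $b, d$ in an earlier orbit while neither belongs to $\overline{a_s}$. I anticipate a case analysis driven by the relative positions of $a$ and $c$ within the cyclic interval $I$ in $P_s \setminus E$: once the endpoints of $I$ in $P'$ are located, the elements of $\overline{a_s}$ are forced to occupy predictable positions relative to elements of the earlier orbits, and either $b$ or $d$ is forced into $\overline{a_s}$ after all.
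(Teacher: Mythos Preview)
Your first paragraph---showing that if both $b$ and $d$ lie in $P_s$ then \Cref{lemma: bordinI} forces one of them into $I$---is exactly the paper's argument. The paper first reduces to $I\in B_s$ so that $I$ is literally an interval in some $P_{s,h}$, whereas you work directly with the shifted set $P'=P_{s,h}+_n x\ell$; these are equivalent. In either formulation, that single case is the \emph{entire} content of the paper's proof: when the paper writes ``Thus either $b$ or $d$ are in $P_s\backslash P_{s,h}$,'' it is tacitly passing from ``not in $P_{s,h}$'' to ``in $P_s\setminus P_{s,h}$,'' which presumes $b,d\in P_s$.

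The case you flag as the ``main obstacle''---one of $b,d$ lying in an earlier orbit $\bigcup_{i<s}\overline{a_i}$---cannot be dispatched by any case analysis, because the corollary as literally stated is false in that regime. Concretely, take $n=6$, $k=\ell=3$, $d=2$, and the ordering $\overline{3}<\overline{2}<\overline{1}$, so that $\overline{a_2}=\{2,5\}$ and $P_2=\{1,2,4,5\}$. With $s=2$ one has $I=\{1,2,4\}=I(1,1)\in B_2\subseteq L_2$, and the quadruple $(a,b,c,d)=(1,3,4,6)$ satisfies $a<b<c<d$, $a,c\in I$, $b,d\notin I$, yet $b,d\in\overline{a_1}=\{3,6\}$ and neither lies in $\overline{a_2}$. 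So the ``anticipated case analysis'' is aimed at a claim that does not hold.

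This does no harm to the paper: in both places the corollary is invoked (\Cref{lemma:LsLiSep} and \Cref{lemma:LsWeakNOrbit}) the elements $b,d$ are taken from some $J\in L_t$ with $t\ge s$, hence $b,d\in P_t\subseteq P_s$. The statement should therefore be read with the implicit hypothesis $b,d\in P_s$, and under that reading your first paragraph already constitutes a complete proof.
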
 
\begin{proof}
It suffices to consider $I\in B_s$ with $a, c\in I$ and $b, d\not\in I$. By construction, we can realize all $I\in B_s$ as an interval in some $P_{s,h}$. Since we have $b,d \not \in I$, we cannot have both $b$ and $d$ in $P_{s,h}$ by Lemma~\ref{lemma: bordinI}. If we did, then either $b\in I$ or $d\in I$ would follow, a contradiction. Thus either $b$ or $d$ are in $P_s\backslash P_{s,h}$. From the definition of $P_{s, h}$, we can see that $P_s\backslash P_{s,h}\subseteq \overline {a_s}$. Thus, either $b$ or $d$ is in $\overline{a_s}$. 
\end{proof}
\begin{lemma}[Proposition 4.9, \cite{PasqTZ2019}]\label{lemma:LsLiSep} If $I\in L_i$ and $J\in L_j$ with $i\neq j$, then $I$ and $J$ are weakly separated. 
\end{lemma}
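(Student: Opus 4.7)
The plan is to assume without loss of generality that $i<j$ and to use the single observation that drives the whole argument: every element of $L_j$ is a subset of $P_j=[n]\setminus\bigcup_{t=1}^{j-1}\overline{a_t}$, so in particular $J\cap\overline{a_i}=\emptyset$.

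Suppose for contradiction that $I$ and $J$ are not weakly separated, so that there exist $a<b<c<d$ in $[n]$ with alternating membership in $I\setminus J$ and $J\setminus I$. In the main case $a,c\in I\setminus J$ and $b,d\in J\setminus I$, we have $a,c\in I$ and $b,d\notin I$, and moreover $b,d\in J\subseteq P_i$, so Corollary 4.8 applies to $I\in L_i$ and forces $b\in\overline{a_i}$ or $d\in\overline{a_i}$; this contradicts $J\cap\overline{a_i}=\emptyset$. For the symmetric case $a,c\in J\setminus I$ and $b,d\in I\setminus J$, I would rotate cyclically, starting at $b$, to produce a four-tuple in which $b,d\in I$ occupy the first and third positions and $c,a\notin I$ occupy the second and fourth; the cyclic reading of Corollary 4.8 (legitimate because the underlying Lemma 3.3 is already stated for a cyclic order) then yields $c\in\overline{a_i}$ or $a\in\overline{a_i}$, again contradicting $a,c\in J$ together with $J\cap\overline{a_i}=\emptyset$.

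The only technical caveat I anticipate is justifying the cyclic invocation of Corollary 4.8 in the symmetric case; this amounts to checking that its proof, which passes through the cyclic Lemma 3.3, carries through verbatim after rotating the starting index of the quadruple. With that granted, the argument parallels \cite[Proposition 4.9]{PasqTZ2019} with $\ell$ replacing $k$ throughout, and no further properties of the algorithm beyond the defining inclusion $L_j\subseteq 2^{P_j}$ are needed.
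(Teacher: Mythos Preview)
Your argument is correct and matches the paper's approach: both assume $i<j$, use $J\subseteq P_j$ to get $J\cap\overline{a_i}=\emptyset$, and then invoke \Cref{cor:ac-bord} on $I$ for the contradiction. You are in fact more careful than the paper, which only writes out the case $a,c\in I\setminus J$, $b,d\in J\setminus I$; your handling of the symmetric case via cyclic rotation (justified, as you note, because the proof of \Cref{cor:ac-bord} reduces to $B_s$ by a $\rho^\ell$-shift and then appeals to the already cyclic \Cref{lemma: bordinI}) fills that small gap.
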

\begin{proof}
    Suppose that $I\in L_i$ and $J\in L_j$ are not weakly separated. Without loss of generality, take $i<j$. Then there exists $a,c\in I\backslash J$ and $b,d\in J\backslash I$ such that $a<b<c<d.$ By \Cref{cor:ac-bord}, either $b$ or $d$ is in $\overline{a_i}$. But by construction, no element in $J$ contains an element in $\overline{a_i}$ as it is from a previous equivalence class. This is a contradiction, so our assumption that $I$ and $J$ were not weakly separated must be false.
\end{proof}

\begin{lemma}\label{lemma:LsWeakOrbit} Given an interval $I\in B_s,$ the set $\{I, I+_n\ell, \dots, I +_{n} d\ell\}$ is weakly separated.
 \end{lemma}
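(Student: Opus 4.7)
The plan is to reduce the claim to showing that for each fixed $t \in \{1, 2, \dots, d-1\}$, the pair $(I, I +_n t\ell)$ is weakly separated; any other pair in the orbit differs by such a $t$-shift after a cyclic relabeling of $[n]$, which preserves weak separation.

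I would argue by contradiction. Suppose $I$ and $I +_n t\ell$ are not weakly separated, so there exist $a_1 < a_2 < a_3 < a_4$ in $[n]$ with $\{a_1,a_3\} \subseteq I \setminus (I +_n t\ell)$ and $\{a_2,a_4\} \subseteq (I +_n t\ell) \setminus I$. Applying Corollary~\ref{cor:ac-bord} to $I \in L_s$ with these witnesses forces $a_2 \in \overline{a_s}$ or $a_4 \in \overline{a_s}$. After a cyclic relabeling of $[n]$ that places the smallest label at an element of $(I +_n t\ell) \setminus I$, the hypotheses of Corollary~\ref{cor:ac-bord} are satisfied for $I +_n t\ell \in L_s$, and that application forces $a_1 \in \overline{a_s}$ or $a_3 \in \overline{a_s}$. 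Thus $I \triangle (I +_n t\ell)$ contains at least one witness from $\overline{a_s}$ on each side.

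Next I would exploit the structural fact that, since $I$ is a cyclic interval in $P_{s,h}$, the intersection $I \cap \overline{a_s}$ is a cyclic interval of the subset $\overline{a_s} \cap P_{s,h} = \{a_s, a_s+\ell, \dots, a_s + (h-1)\ell\}$ with its induced cyclic order. Parameterizing $\overline{a_s}$ by $j \in \mathbb{Z}/d\mathbb{Z}$ via $j \leftrightarrow a_s + j\ell$, let $J \subseteq \{0, 1, \dots, h-1\}$ encode $I \cap \overline{a_s}$; then $(I +_n t\ell) \cap \overline{a_s}$ is encoded by $J + t \pmod d$.

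The hard part will be closing the case analysis on how many of the $a_i$ actually lie in $\overline{a_s}$. If all four do, they yield cyclically interlacing elements alternating between $J$ and $J+t$ in $\mathbb{Z}/d\mathbb{Z}$, which contradicts the observation that the symmetric difference of a cyclic interval with its cyclic shift is a union of two disjoint arcs. The more delicate situation is when only one witness from each side lies in $\overline{a_s}$; here I would use the cyclic interval structure of $I$ in $P_{s,h}$ to constrain the locations of the non-$\overline{a_s}$ witnesses relative to the endpoints of this interval, and then combine those constraints with the fixed positions of the $\overline{a_s}$-witnesses to contradict the ordering $a_1 < a_2 < a_3 < a_4$. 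I expect this mixed case to be the main technical hurdle, while the all-in-$\overline{a_s}$ subcase follows cleanly from the cyclic-interval/cyclic-shift structure encoded by $J$.
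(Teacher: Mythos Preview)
Your route diverges substantially from the paper's and leaves an acknowledged gap. The paper's proof is three sentences: it observes that $S_{P_s}$ respects the cyclic order on $P_s$ and that, because $P_s$ is a union of $\rho^\ell$-orbits with exactly $\ell-s+1$ elements in each length-$\ell$ window of $[n]$, one has $S_{P_s}^{\ell-s+1}(x)=x+\ell$ for every $x\in P_s$. Thus addition of $\ell$ is literally a power of the successor on $P_s$, so the whole orbit $\{I+_n x\ell\}$ is obtained by applying this cyclic rotation to $I$; the paper concludes weak separation directly from that rotational picture, with no appeal to \Cref{cor:ac-bord} and no case analysis on $\overline{a_s}$ at all.

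By contrast, you reach immediately for the machinery that the paper reserves for \Cref{lemma:LsWeakNOrbit} (the distinct-orbit case): two applications of \Cref{cor:ac-bord} followed by a split on how many of the four witnesses lie in $\overline{a_s}$. You yourself flag the mixed case---one witness from each side in $\overline{a_s}$, the others outside---as ``the main technical hurdle'' and do not resolve it; the constraints you list (interval structure of $I$ in $P_{s,h}$, positions of the non-$\overline{a_s}$ witnesses relative to the endpoints) do not assemble into a contradiction without further subdivision, and several sub-subcases would still need to be handled. So as written the proposal has a genuine gap. The structural shortcut you are missing is exactly the identity $+\ell=S_{P_s}^{\ell-s+1}$ on $P_s$: it lets one transport the whole question into $P_s$, where every member of the orbit is a cyclic rotate of $I$, and thereby sidestep the witness-by-witness analysis you are attempting.
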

\begin{proof}
By definition, $S_{P_s}$ respects the cyclic order on $P_s$. Then, by construction, for $x \in P_s$, $S_{P_s}^{\ell-s+1}(x)=x+\ell$. Since the elements generated from an element of $B_s$ are created precisely by adding $\ell$ modulo $n$ and since $S_{P_s}$ respects the ordering, each orbit generated by an element of $B_s$ is weakly separated.       
\end{proof}
\begin{lemma}\label{lemma:LsWeakNOrbit} The collection $L_s$ is weakly separated. 
\end{lemma}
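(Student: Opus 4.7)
My plan is to reduce to a single-orbit comparison using the cyclic-shift invariance of both $L_s$ and weak separation, and then derive a contradiction by exploiting the cyclic-arc structure of the intersections of elements of $L_s$ with $\overline{a_s}$.

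First, I take arbitrary $I, J \in L_s$. By Lemma~\ref{lemma:LsWeakOrbit}, I only need to handle the case in which $I$ and $J$ lie in distinct $\rho^\ell$-orbits. Applying a simultaneous cyclic shift by $-x\ell$ for an appropriate $x \in \{0, \dots, d-1\}$ preserves both $L_s$ setwise and the weak-separation relation between $I$ and $J$, so I may assume $I \in B_s$ and $J = \tilde J +_n y\ell$ for some $\tilde J \in B_s \setminus \{I\}$ and $y \in \{0, \dots, d-1\}$.

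Suppose for contradiction that $I$ and $J$ are not weakly separated. Then there exist $a < b < c < d$ in $[n]$ such that (after possibly swapping the roles of $I$ and $J$) $a, c \in I \setminus J$ and $b, d \in J \setminus I$. Applying Corollary~\ref{cor:ac-bord} to $I$ forces $b \in \overline{a_s}$ or $d \in \overline{a_s}$. I would then want a companion statement for $J$: by reading $b < c < d < a+n$ cyclically as an alternating quadruple for $J$ and converting it back to a linear witness via a $\rho^\ell$-shift that preserves $L_s$ and the membership relation with $\overline{a_s}$, I can analogously conclude that $a \in \overline{a_s}$ or $c \in \overline{a_s}$. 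Hence at least one of $\{a,c\}$ and at least one of $\{b,d\}$ lies in $\overline{a_s}$.

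The key structural ingredient is that for any $K \in L_s$, the intersection $K \cap \overline{a_s}$ is a cyclically contiguous arc of $\overline{a_s}$; this follows because $K$ is an interval in $P_{s,h} +_n x\ell$ for some $h, x$, and the complement $P_s \setminus (P_{s,h} +_n x\ell)$ consists of $d-h$ cyclically consecutive elements of $\overline{a_s}$. With this in hand, a short case analysis over which elements of $\{a,b,c,d\}$ sit in $\overline{a_s}$ shows that the weak-separation witness forces $I \cap \overline{a_s}$ and $J \cap \overline{a_s}$ to share the same missing block in $\overline{a_s}$, so $I$ and $J$ are intervals in the same shifted $P_{s,h}$, placing them in the same $\rho^\ell$-orbit and contradicting the distinct-orbit reduction.

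I expect the main obstacle to be exactly this final case analysis: it closely parallels the argument of \cite[Proposition 4.10]{PasqTZ2019} for the special case $\ell = k$, but requires verifying that the cyclic shift by $\ell$ interacts correctly with the nested cyclic orderings on $P_s$ and on the $P_{s,h}$. The companion statement for $J$, obtained by cyclic rotation of the witness, is the subtlest technical step and is what I would want to write out most carefully.
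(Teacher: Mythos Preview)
Your strategy---reduce to distinct orbits, apply Corollary~\ref{cor:ac-bord} symmetrically to $I$ and $J$, then exploit the arc structure of intersections with $\overline{a_s}$---tracks the paper's proof through the point where one of $\{a,c\}$ and one of $\{b,d\}$ land in $\overline{a_s}$. But your stated endgame contains a concrete error: being intervals in the same shifted $P_{s,h}$ does \emph{not} force $I$ and $J$ into the same $\rho^\ell$-orbit. In the paper's own example $n=6,\ k=3,\ \ell=3,\ s=2$, both $124$ and $245$ are length-$3$ intervals in $P_{2,2}=\{1,2,4,5\}$, yet their orbits $\{124,145\}$ and $\{245,125\}$ are distinct. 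What \emph{would} follow from ``intervals in a common cyclically ordered subset of $P_s$'' is that $I$ and $J$ are weakly separated (via Lemma~\ref{lemma: bordinI}), which is the contradiction you want---but not the one you wrote.

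Even with that repair, the step you yourself flag as the main obstacle---showing the witness forces $I\cap\overline{a_s}=J\cap\overline{a_s}$---is neither carried out nor clearly true, and it is stronger than necessary. The paper bypasses it: after arranging $a,b\in\overline{a_s}$, it writes $J=I(j,h)$, splits on whether $d<_j b$ or $b<_j d$, and in each branch produces a direct membership contradiction (either $a\in J$, or else $c\in\overline{a_s}$ and hence $b\in I$, since an orbit representative cannot contain $a,c\in\overline{a_s}$ while skipping an element of $\overline{a_s}$ lying between them). No comparison of missing blocks is needed.
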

\begin{proof}

By Lemma~\ref{lemma:LsWeakOrbit}, any two elements $I, J\in L_s$ belonging to the same $\rho^\ell$ orbit are weakly separated. Let $I, J \in L_s$ belong to distinct $\rho^\ell$ orbits and suppose that $I$ and $J$ are not weakly separated. Then, there must exist $a, c \in I\backslash J$ and $b,d \in J \backslash I$ such that $a < b < c < d$. By \Cref{cor:ac-bord}, we have that without loss of generality, $a, b \in \overline{a_s}$. 
Since $I\neq J$, we have that $|P_s|>k$, and we can uniquely give $J$ as $I(j, h)$ for some $j$. Consider the linear order $<_j$ induced by $j$. If $d<_j b$, then $[d, b]_{P_s}\subseteq J\cup \overline{a_s}$ by the definition of $P_{s,h}$. Moreover, $b\in J$ implies that $a\in J$ as well, because both $a$ and $b$ lie in $\overline{a_s}$ and if $b \leq a_s + h\ell$, then we also have $a\leq h\ell$. This contradicts the original assumption that $a\in I\backslash J$, so we must have $b<_j d$. As above, this implies that $[b, d]_{P_s}\subseteq J\cup \overline{a_s}$. Since $c\in [b,d]\backslash J,$ we have that $c\in \overline{a_s}$ in addition to $a$ and $b$.  Since $a,c$ are from the same interval, $I,$ $a<b<c$ is assumed. In a given orbit representative, if both $a$ and $c$ are in $\overline{a_s}$, that representative must contain all elements of $\overline{a_s}$ between $a$ and $c$, since 
$P_{s,h}$ is constructed by removing the last $d-h$ elements of $\overline{a_s}$ from $P_{s}$. Therefore, if $a,c\in I$ then $b\in I$ as well since we cannot skip elements in $\overline{a_s}$.  However $b\in J\backslash I$, providing us with our desired contradiction. Thus, $L_s$ is weakly separated. 

\end{proof}

\begin{corollary}\label{Cor:weakSeperation}
The collection $D=\bigcup L_s$ is weakly separated.
\end{corollary}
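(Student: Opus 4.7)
The plan is to observe that weak separation is a pairwise property of a collection, so it suffices to check that any two sets $I, J \in D$ are weakly separated. Since $D = \bigcup_{s=1}^{\ell - r + 1} L_s$, any such pair $(I, J)$ falls into exactly one of two cases: either $I$ and $J$ both lie in the same $L_s$, or they lie in different $L_s$ and $L_t$ with $s \neq t$.

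In the first case, I would directly invoke Lemma~\ref{lemma:LsWeakNOrbit}, which asserts precisely that each $L_s$ is a weakly separated collection. In the second case, I would invoke Lemma~\ref{lemma:LsLiSep}, which handles exactly the cross-class weak separation between $L_i$ and $L_j$ for $i \neq j$. Combining both cases exhausts all possible pairs from $D$, yielding that $D$ is weakly separated.

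There is no serious obstacle here; the proof is essentially a one-line assembly of the two preceding lemmas. The only thing to be careful about is simply noting explicitly that every pair of elements of $D$ falls into one of the two cases above, so that the weak separation property, which is defined pairwise (see Definition~\ref{def:weakly-separated}), extends from the pieces $L_s$ and from cross-pairs to the full union.

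\begin{proof}
Let $I, J \in D$. By construction of $D = \bigcup_{s=1}^{\ell - r + 1} L_s$, there exist indices $s, t$ with $I \in L_s$ and $J \in L_t$. If $s = t$, then $I$ and $J$ are weakly separated by Lemma~\ref{lemma:LsWeakNOrbit}. If $s \neq t$, then $I$ and $J$ are weakly separated by Lemma~\ref{lemma:LsLiSep}. Since weak separation is a pairwise condition, $D$ is a weakly separated collection.
\end{proof}
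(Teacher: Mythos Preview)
Your proof is correct and takes essentially the same approach as the paper: both argue pairwise, invoking Lemma~\ref{lemma:LsWeakNOrbit} for pairs within a single $L_s$ and Lemma~\ref{lemma:LsLiSep} for pairs in distinct $L_s$ and $L_t$. Your version is slightly more explicit about the case split, but the content is identical.
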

\begin{proof}\
    For a fixed $s$, every element in $L_s$ is weakly separated by \Cref{lemma:LsWeakNOrbit}. The union of all $L_s$ is then weakly separated by \Cref{lemma:LsLiSep}.
\end{proof}
\subsection{Generating a set when $n\neq d\ell$}\label{sub: not_divide} 

We now show how to generalize to the case where $1< \gcd(n, \ell) < \ell$ following the method laid out in \cite[Section 5]{PasqTZ2019}. Take $g=\gcd(n, \ell)$ and $d=\frac{n}{g}$. Let $C$ be the weakly separated collection generated by the algorithm from Section~\ref{sec: n=dl} with parameters $n'=d\ell, k'=k, \ell'=\ell$. We choose a total order such that the equivalence classes for $\overline{1}, \dots,\overline{g}$ are all greater than $ \overline{g+1},\dots, \overline{\ell}$. Note that $g$ is defined in terms of $n$ and not $n'$. We then remove the first $\ell-g$ sets from $C$ to obtain $k(n-k)+1$ elements and use them to recover a weakly separated collection $D\subseteq {[n]\choose k}$.

\begin{proposition}\label{Prop:genConstruct}
Let $D'=C \backslash \bigcup \limits_{s=1}^{\ell-g} L_s$. Then, $D'$ is a $\rho^\ell$-symmetric weakly separated collection with $k(n-k)+1$ elements. 
\end{proposition}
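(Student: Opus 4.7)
My plan is to verify the three defining properties of $D'$---its cardinality, weak separation, and $\rho^\ell$-symmetry---using the machinery developed in the preceding subsections for the $n = d\ell$ case, and then to translate $D'$ from $\binom{[n']}{k}$ into $\binom{[n]}{k}$ via an order-preserving bijection. The structure mirrors the proof for the case $n = d\ell$, with the cardinality count and the final identification being the new ingredients.

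I would begin with the cardinality. By Proposition~\ref{prop: cardinality_n=dl}, $|C| = k(d\ell - k) + 1$. Under the chosen total order, the removed orbit sets $L_1, \ldots, L_{\ell - g}$ correspond precisely to the equivalence classes $\overline{g+1}, \ldots, \overline{\ell}$. The hypothesis $k \leq n/2 = dg/2$ together with $k = dr + c$ for $c \in \{-1, 0, 1\}$ forces $r \leq g$ (and $r < g$ when $c = 1$), so Lemmas~\ref{lemma: sizeLsSmalls} and \ref{lemma: SizeBsSmalls} apply to give $|L_s| = dk$ for each removed index $s \in \{1, \ldots, \ell - g\}$. A short arithmetic then yields $|D'| = k(d\ell - k) + 1 - dk(\ell - g) = k(n - k) + 1$, as desired.

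Next, I would observe that at the level of $\binom{[n']}{k}$, the collection $D'$ is weakly separated and $\rho^\ell$-symmetric. Weak separation follows immediately from $D' \subseteq C$ together with Corollary~\ref{Cor:weakSeperation}, and $\rho^\ell$-symmetry follows because each $L_s$ is a disjoint union of complete $\rho^\ell$-orbits by construction, so deleting whole $L_s$'s leaves a union of such orbits.

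Finally, I would identify $D'$ with a collection in $\binom{[n]}{k}$. Every set in $D'$ is a $k$-subset of $P_{\ell - g + 1} = \bigcup_{a = 1}^{g} \overline{a}$, a subset of $[n']$ of cardinality $gd = n$; let $\phi$ be the unique order-preserving bijection from this union onto $[n]$, given explicitly by $\phi(a + i \ell) = a + ig$ for $a \in [g]$ and $0 \leq i \leq d-1$. Weak separation transfers automatically since $\phi$ preserves the linear order. I expect the main technical obstacle to be verifying that $\phi$ carries $\rho^\ell$-orbits of $k$-sets in $[n']$ to $\rho^\ell$-orbits of $k$-sets in $[n]$, since the element-wise actions are \emph{not} intertwined by $\phi$. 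The core computation is that $\phi$ conjugates addition of $\ell$ modulo $n'$ to addition of $g$ modulo $n$; since $\ell = gm$ with $\gcd(m, d) = 1$, the cyclic subgroup of $\mathbb{Z}/n\mathbb{Z}$ generated by $g$ coincides with the one generated by $\ell$, so the $\rho^\ell$-orbit of $\phi(A)$ in $[n]$ equals the $\phi$-image of the $\rho^\ell$-orbit of $A$ in $[n']$. Given this orbit identification, $\rho^\ell$-symmetry of the image collection in $\binom{[n]}{k}$ follows from $\rho^\ell$-symmetry of $D'$ in $\binom{[n']}{k}$.
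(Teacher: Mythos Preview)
Your argument for the cardinality, weak separation, and $\rho^\ell$-symmetry of $D'\subseteq\binom{[n']}{k}$ is essentially the paper's proof, with the welcome added care of checking that $k\le n/2$ forces $r\le g$ (strict when $c=1$) so that every removed index $s\le \ell-g$ falls in the range where Lemmas~\ref{lemma: sizeLsSmalls} and~\ref{lemma: SizeBsSmalls} apply; the paper simply asserts $|B_s|=k$ for these $s$. Your third paragraph, however, proves more than the proposition claims: the statement concerns $D'$ only as a subcollection of $\binom{[n']}{k}$ of the target size $k(n-k)+1$, and the transfer to $\binom{[n]}{k}$ via an order-preserving bijection (your $\phi$ is the paper's $F^{-1}$) together with the survival of $\rho^\ell$-symmetry is deferred in the paper to Lemmas~\ref{Lemma:incPreserve}--\ref{lemma:set-symmetry}. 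Your sketch of that transfer is correct and parallels theirs.
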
 
\begin{proof}
$C$ is weakly separated by Corollary~\ref{Cor:weakSeperation}, and from Proposition~\ref{prop: cardinality_n=dl}, we also have that $|C|=k(d\ell-k)+1$. By \Cref{lemma: SizeBsSmalls}, when $1\leq s \leq \ell-g$, we have $|B_s|=k$. Moreover, by Lemma~\ref{lemma: sizeLsSmalls}, each element in $B_s$ will have an orbit of size $d,$ implying that $|L_s|=dk$. Each $L_s$ is disjoint, so we obtain $$\left|\bigcup \limits_{s=1}^{\ell-g} L_s\right|=(\ell-g)dk.$$
As such, the number of elements in $D'$ is given by:
\begin{align*}
    |D'|&=\left|C\backslash\bigcup \limits_{s=1}^{\ell-g} L_s\right|\\
    &=k(d\ell-k)+1-(\ell-g)dk\\
    &=k(n-k)+1
\end{align*}
Thus $D'$ is a symmetric, but not maximal, weakly separated collection with size $k(n-k)+1$. 
\end{proof}

We now turn towards defining a function to convert the $\rho^{\ell'}$-symmetric collection $D'\subseteq {[n'] \choose k'}$ to a $\rho^\ell$-symmetric collection $D\subseteq {[n]\choose k}$. We first give a sufficient condition for such a function to preserve weak separation.
\begin{lemma}\label{Lemma:incPreserve}
Let $F:[n]\to [m]$ be an injective increasing function. If $D\subseteq {[n]\choose k}$ is weakly separated, then $F(D)\subseteq {[m]\choose k}$ is weakly separated as well. 

\end{lemma}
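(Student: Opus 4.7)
The plan is to prove this by contrapositive: assume that $F(S)$ and $F(T)$ fail to be weakly separated for some $S, T \in D$, and then pull the obstruction back through $F$ to contradict weak separation of $D$.

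First I would unpack the hypothesis. If $F(S)$ and $F(T)$ are not weakly separated, then by Definition~\ref{def:weakly-separated} there exist $a' < b' < c' < d'$ in $[m]$ with $a', c' \in F(S) \setminus F(T)$ and $b', d' \in F(T) \setminus F(S)$. Because each of these four elements lies in the image of $F$, and because $F$ is injective, the preimages $a = F^{-1}(a')$, $b = F^{-1}(b')$, $c = F^{-1}(c')$, $d = F^{-1}(d')$ are uniquely defined elements of $[n]$.

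Next I would verify the two properties needed to contradict weak separation of $S, T \in D$. Monotonicity of $F$ forces $a < b < c < d$, since $F$ strictly preserves order on $[n]$. Injectivity of $F$ gives $a \in S \setminus T$: indeed $a' \in F(S)$ means $a' = F(x)$ for some $x \in S$, and injectivity forces $x = a$, so $a \in S$; similarly $a' \notin F(T)$ rules out $a \in T$ (otherwise $F(a) = a' \in F(T)$). The same argument, verbatim, yields $c \in S \setminus T$ and $b, d \in T \setminus S$. This directly contradicts weak separation of $S$ and $T$ in $D$.

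There is essentially no obstacle here; the only subtlety worth flagging is that the argument uses both monotonicity (to transport the inequality $a' < b' < c' < d'$) and injectivity (to transport the set-difference membership), so both hypotheses on $F$ are genuinely needed. Once these observations are made the proof is a single paragraph, so I would keep the writeup brief.
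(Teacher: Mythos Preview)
Your proof is correct and follows essentially the same contrapositive argument as the paper: assume $F(S)$ and $F(T)$ are not weakly separated, and pull the witnessing quadruple back through $F$ using monotonicity and injectivity. If anything, your version is slightly more careful than the paper's in explicitly verifying that injectivity is what transports the set-difference conditions $a',c'\in F(S)\setminus F(T)$ and $b',d'\in F(T)\setminus F(S)$ back to $a,c\in S\setminus T$ and $b,d\in T\setminus S$.
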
 
\begin{proof}
Given $n$ and $k$, let $D$ be a weakly separated collection and $F:[n] \rightarrow [m]$ be an injective increasing function, where $m > n$. Extend $F$ to a function on weakly separated collections simply by applying $F$ to each subset of $D$. Assume towards a contradiction that $F(D)$ is not weakly separated. 
Then, there must be elements $F(S)=\{F(a_1), F(a_2), \dots F(a_k)\}$ and $F(T)=\{F(b_1), F(b_2), \dots, F(b_k)\}$ in $F(D)$ such that, without the loss of generality, $F(a_i) < F(b_j) <F( a_{i'})< F(b_{j'})$. Since $F$ is increasing, it preserves order. Therefore, $a_i < b_j < a_{i'}<b_j'$, which contradicts the weak separation of $D$. Thus, $F$ preserves weak separation.       
\end{proof}

Now define $F:[n] \rightarrow [d\ell]$ by $F(a+gx)=a+\ell x$, for $1 \leq a \leq g$ and $0 \leq x < d$. Note that $F$ is injective and increasing, and that $d\ell > n$ when $\ell \nmid n$. If we restrict the codomain to elements in the equivalence classes $\overline{1},\dots,\overline{g}$, then $F$ becomes bijective, by construction. Denote the codomain of the restriction by $A$. 
\begin{lemma}[Lemma 5.2, \cite{PasqTZ2019}]\label{Lemma:Commutes}
The function $F$ satisfies $F \circ S_{[n]}=S_{A} \circ F$. 
 \end{lemma}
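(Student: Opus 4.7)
The plan is to prove the identity $F \circ S_{[n]} = S_A \circ F$ by a direct case analysis on an arbitrary element written in the normal form $a + gx \in [n]$ with $1 \leq a \leq g$ and $0 \leq x < d$. Before diving into cases, I would first record an explicit description of $A$: by construction, $A = F([n])$ consists of exactly the elements $a' + \ell x'$ of $[d\ell]$ with $1 \leq a' \leq g$ and $0 \leq x' < d$, i.e. precisely the elements of $[d\ell]$ that lie in the $\rho^\ell$-classes $\overline{1}, \ldots, \overline{g}$. In particular, the successor of $a' + \ell x'$ in the cyclic ordering on $A$ inherited from $[d\ell]$ skips over the gap $\{g+1+\ell x', \ldots, \ell(x'+1)\}$, which contains no element of $A$.

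I would then check the identity in three cases:
\begin{enumerate}
    \item If $1 \leq a < g$, then $S_{[n]}(a+gx) = (a+1)+gx$, so $F(S_{[n]}(a+gx)) = (a+1) + \ell x$. On the other side, $a + \ell x$ and $(a+1)+\ell x$ are consecutive elements of $A$, so $S_A(F(a+gx)) = (a+1)+\ell x$ as well.
    \item If $a = g$ and $x < d-1$, then $S_{[n]}(g + gx) = 1 + g(x+1)$, and $F$ sends this to $1 + \ell(x+1)$. On the other side, $F(g+gx) = g + \ell x$, and by the gap observation above the successor of $g+\ell x$ in $A$ is $1 + \ell(x+1)$.
    \item If $a = g$ and $x = d-1$, then $g+g(d-1) = n$, so $S_{[n]}(n) = 1$ and $F(1) = 1$. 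On the right, $F(n) = g + \ell(d-1)$ is the maximum of $A$, so $S_A$ cycles back to the minimum $1 \in A$.
\end{enumerate}

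In all three cases both sides agree, so $F \circ S_{[n]} = S_A \circ F$. The argument is essentially bookkeeping: the only thing to be careful about is the cyclic wrap, namely that when the $a$-coordinate rolls over from $g$ to $1$ on the $[n]$ side, the corresponding step on the $[d\ell]$ side must skip exactly the elements outside $A$, and that the global wrap at $n$ lands on $1$ on both sides. No representation-theoretic or combinatorial machinery beyond the definition of $F$ and the description of $A$ is needed, so I do not anticipate a real obstacle; the main point of the lemma is simply to record this compatibility for use in transferring the weakly separated collection from $[d\ell]$ back to $[n]$ in the next step.
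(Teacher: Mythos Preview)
Your proof is correct and complete. The three-case split cleanly handles the interior step, the roll-over of the $a$-coordinate across the gap, and the global cyclic wrap, and your preliminary description of $A$ makes the gap-skipping in case~(2) transparent.

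The paper takes a slightly different, more abstract route: rather than computing case by case, it observes that $F$ is a strictly increasing bijection from $[n]$ onto $A$, and any strictly increasing bijection between finite linearly ordered sets automatically sends successors to successors; the cyclic wrap at $n$ is then checked separately, much as in your case~(3). Your explicit computation has the virtue of making the structure of $A$ and the location of the gaps completely visible, which is helpful for the reader; the paper's argument is a touch shorter once one recognizes the order-theoretic fact, but as written its contradiction step is somewhat terse. Either approach is perfectly adequate for this bookkeeping lemma.
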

\begin{proof}
We will show $F$ commutes for a given element and obeys the cyclic ordering.  Recall that $F$ is increasing and a bijection. Furthermore, note that $g>1$ as $n$ and $\ell$ cannot be coprime. As such, $F(1)$ is uniquely written $F(1+0g)$ and thus $x$ is $0$. Then $F(1)=1$.
We first would like to show that $S_A(F(x))=F(x+1)$. Suppose not. Then $S_A(F(x))=F(x+i)$ for $i\neq 1$. Since $F$ is strictly increasing, no later element can map to $F(x+1)$. As such, $F(x+1)$ is not in the image of $F.$ However $F$ is bijective, so this is a contradiction. Thus $S_A(F(x))=F(x+1)=F(S_{n}(x)).$ 
The following computation verifies that $F$ obeys the cyclic ordering: $$F(S_{[n]}(n))=F(n+1)=S_{A}(n)=S_A(g+\ell(d-1))=S_A(F(g+g(d-1)))=S_A(F(n)).$$
\end{proof}

\begin{lemma}\label{lemma:set-symmetry}
 The set $D=F^{-1}(D')$ is $\rho^\ell$-symmetric.
\end{lemma}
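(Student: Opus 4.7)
The plan is to show that for every $I \in D = F^{-1}(D')$, we also have $I +_n \ell \in D$, which is equivalent to verifying $F(I +_n \ell) \in D'$. The whole argument reduces to understanding how the map $F$ intertwines the shifts by $\ell$ on the two ambient cyclic groups $\Z/n\Z$ and $\Z/d\ell\Z$.

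First, I would iterate \Cref{Lemma:Commutes} to obtain $F \circ S_{[n]}^{\,j} = S_A^{\,j} \circ F$ for every $j \geq 0$. The next step, which is the crux of the proof, is to verify by direct inspection that $S_A^{\,g}$ agrees on $A$ with the shift by $\ell$ in the ambient cyclic group $[d\ell]$. Enumerating $A$ in its inherited increasing order as the disjoint blocks $\{1 + x\ell,\, 2 + x\ell,\, \dots,\, g + x\ell\}$ for $x = 0, 1, \dots, d-1$, each block has length $g$, so $g$ successor steps move an element $a + x\ell$ to $a + (x+1)\ell \pmod{d\ell}$. Writing $\ell = gm$ with $m = \ell/g$, it then follows that
\[
F(I +_n \ell) \;=\; F\bigl(S_{[n]}^{\,gm}(I)\bigr) \;=\; S_A^{\,gm}(F(I)) \;=\; F(I) +_{d\ell} m\ell.
\]

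Second, since $D'$ is $\rho^\ell$-symmetric in the sense of \Cref{def:symmetric-ws-collection}, it is preserved by the entire cyclic subgroup $\langle \ell \rangle \subseteq \Z/d\ell\Z$. In particular $m\ell$ lies in this subgroup, so $D' +_{d\ell} m\ell = D'$. Combining this with the displayed equation gives $F(I) \in D'$ if and only if $F(I +_n \ell) \in D'$, which is exactly the closure property we need.

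I expect the main (and essentially only) subtlety to be the identification of $S_A^{\,g}$ with the restriction of shift-by-$\ell$ to $A$. This is a small but crucial combinatorial check, because it is precisely the mechanism by which the $\rho^\ell$-symmetry on $[d\ell]$ gets transported back to a $\rho^\ell$-symmetry on $[n]$ despite the fact that $F$ does not commute with the naive shift-by-$\ell$ operations on nose. Once this identification is in hand, the rest of the proof is a routine application of cyclic-subgroup closure.
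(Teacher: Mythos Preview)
Your proof is correct and follows essentially the same approach as the paper: both arguments hinge on the identity $F(I +_n g) = F(I) +_{d\ell} \ell$ (which you obtain as $S_A^{\,g} = +_{d\ell}\,\ell$ on $A$ via iterated use of \Cref{Lemma:Commutes}, and which the paper states as $F^{-1}(F(I)+\ell)=I+g$). The only cosmetic difference is that the paper iterates this identity on the $[n]$ side (showing $D$ is closed under $+_n g$ and then invoking $g\mid\ell$), whereas you iterate on the $[d\ell]$ side (showing $F(I+_n\ell)=F(I)+_{d\ell} m\ell$ and then invoking closure of $D'$ under the subgroup $\langle\ell\rangle$).
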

\begin{proof}
By \Cref{Lemma:Commutes}, for $I \in D$, $F(I)$ is fixed under addition by $\ell$ modulo $d\ell$. Therefore, if $F^{-1}(F(I))=I \in D$, then $F^{-1}(F(I)+\ell) \in D$ as well. Note that $F^{-1}(\ell)=g$. Thus, $F^{-1}(F(I)+\ell)=I+g$. Since $g \mid \ell$, $I+\ell$ is also contained in $D$.   
\end{proof}

We now prove Theorem~\ref{thm: intro_WSCs}, restated here for clarity.
\begin{theorem}
$k \equiv c \pmod d$ where $c \in \{-1,0, 1\}$ if and only if $D$ is a $\rho^\ell$-symmetric maximal weakly separated collection.
\end{theorem}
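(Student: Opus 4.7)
The forward direction $(\Rightarrow)$ is already established by Proposition \ref{prop: necessary}, which uses the fixed-point analysis of the plabic tiling $\Sigma(D)$ under the $2\pi\ell/n$ rotation to force $k \equiv c \pmod{d}$ with $c \in \{-1, 0, 1\}$. All the remaining content of the theorem is the converse, and the plan is to assemble the machinery developed throughout this section into a single argument that branches on whether $\ell$ divides $n$.

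When $\ell \mid n$ (equivalently, $\gcd(n, \ell) = \ell$ and $n = d\ell$), the algorithm of Subsection \ref{sec: n=dl} produces the candidate $D$ directly. I would verify the three defining properties in turn: $\rho^\ell$-symmetry is immediate since each $L_s$ is defined as the union of complete $\rho^\ell$-orbits of representatives in $B_s$; weak separation is Corollary \ref{Cor:weakSeperation}; and the cardinality $|D| = k(n-k)+1$ is Proposition \ref{prop: cardinality_n=dl}, handled case-by-case in $c \in \{-1, 0, 1\}$. Maximality then follows from Theorem \ref{WSC-Size}.

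When $\ell \nmid n$, let $g = \gcd(n, \ell) < \ell$ and set $n' = d\ell$. Apply the $n = d\ell$ construction with parameters $(k, n', \ell)$, choosing the total ordering on orbit representatives so that $\overline{g+1}, \ldots, \overline{\ell}$ are the $\ell - g$ smallest. Removing these first $\ell - g$ collections yields $D'$, which by Proposition \ref{Prop:genConstruct} is a weakly separated, $\rho^\ell$-symmetric subset of $\binom{[n']}{k}$ with exactly $k(n-k)+1$ elements. By the choice of ordering, every subset in $D'$ lies in $\binom{A}{k}$ for $A = F([n])$, so pulling back via the order-preserving bijection $F^{-1}|_A$ from Subsection \ref{sub: not_divide} yields $D = F^{-1}(D') \subseteq \binom{[n]}{k}$. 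Lemma \ref{lemma:set-symmetry} provides the $\rho^\ell$-symmetry on $[n]$, and since $F^{-1}|_A$ is strictly increasing, the proof of Lemma \ref{Lemma:incPreserve} applies verbatim in reverse to show $D$ is weakly separated. The cardinality is unchanged, and Theorem \ref{WSC-Size} gives maximality.

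The main obstacle is not conceptual but combinatorial bookkeeping: ensuring that removing exactly $\ell - g$ orbit collections from the $n = d\ell$ construction yields precisely $k(n-k)+1$ elements lying in $\binom{A}{k}$ and compatible with the $F^{-1}$ pullback. This is precisely what Proposition \ref{Prop:genConstruct} together with Lemmas \ref{Lemma:Commutes} and \ref{lemma:set-symmetry} are engineered to verify, so once these inputs are in place, the theorem reduces to checking that the pieces fit together in each case.
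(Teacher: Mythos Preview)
Your proposal is correct and follows essentially the same route as the paper's proof, assembling the same ingredients in the same order: Proposition~\ref{prop: necessary} for the necessity of the congruence, then a split on whether $\ell \mid n$, invoking Proposition~\ref{prop: cardinality_n=dl} and Corollary~\ref{Cor:weakSeperation} in the divisible case and Proposition~\ref{Prop:genConstruct}, Lemma~\ref{Lemma:incPreserve}, and Lemma~\ref{lemma:set-symmetry} in the non-divisible case. One minor slip: you label the direction handled by Proposition~\ref{prop: necessary} as the forward direction $(\Rightarrow)$, but as the theorem is stated that proposition supplies the \emph{reverse} implication (existence of a symmetric maximal collection forces the congruence on $k$); the construction is what establishes $(\Rightarrow)$.
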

\begin{proof}
From \Cref{prop: necessary}, we know that $k\equiv c \pmod d$ is a necessary condition for the existence of a $\rho^\ell$-symmetric maximal weakly separated collection. Thus, it suffices to prove that the algorithm described above generates a $\rho^\ell$-symmetric weakly separated collection of size $k(n-k)+1$ in this case. If $n=d\ell$, then this follows from Proposition~\ref{prop: cardinality_n=dl} and Corollary~\ref{Cor:weakSeperation}. If $n\neq d\ell$, then \Cref{Prop:genConstruct}, $D'$ implies that is a weakly separated collection with $k(n-k)+1$ elements. By \Cref{Lemma:incPreserve}, $D$ is weakly separated, and since $F$ is a bijection, $|D|=k(n-k)+1$, implying that $D$ is maximal. Lastly, by \Cref{lemma:set-symmetry}, $D$ is $\rho^\ell$-symmetric. Thus, $D$ is a $\rho^\ell$-symmetric maximal weakly separated collection. 
\end{proof}

\bibliographystyle{alpha}
\bibliography{Biblio}

\end{document}